\def\leq{\leqslant}
\def\geq{\geqslant}
\theoremstyle{plain}
\newtheorem{theorem}{Theorem}[section]
\newtheorem{proposition}{Proposition}[section]
\newtheorem{lemma}[proposition]{Lemma}
\theoremstyle{remark}
\numberwithin{equation}{section}
\begin{document}

\vglue -5mm

\title[MDC on the product of a circle and  the Heisenberg nilmanifold]
{M{\"o}bius Disjointness Conjecture on the product of \\
a circle and the Heisenberg nilmanifold}
\author{Jing Ma \& Ronghui Wu}

\address{%
Jing Ma
\\
School of Mathematics
\\
Jilin University
\\
Changchun 130012
\\
P. R. China
}
\email{jma@jlu.edu.cn}

\address{
Ronghui Wu
\\
School of Mathematics
\\
Jilin University
\\
Changchun 130012
\\
P. R. China
}
\email{rhwu21@mails.jlu.edu.cn}

\date{\today}

\begin{abstract}
Let $\mathbb{T}$ be the unit circle and $\Gamma\backslash G$ the $3$-dimensional Heisenberg
nilmanifold. We prove that the M{\"o}bius function is linearly disjoint from a class of
distal skew products on $\mathbb{T}\times\Gamma\backslash G$.
These results generalize a recent work of Huang-Liu-Wang.
\end{abstract}

\keywords{M{\"o}bius Disjointness Conjecture, distal flow, skew product, Heisenberg nilmanifold, measure complexity.}

\maketitle

\section{{Introduction}}

Let $(X, T)$ be a flow, where $X$ is a compact metric space and $T: X\rightarrow X$ a continuous map.
Let $\mu(n)$ be the M{\"o}bius function, that is $\mu(n)=0$ if $n$ is not square-free,
and  $\mu(n)=(-1)^q$ if $n$ is a product of $q$ distinct primes.
%The behavior of $\mu$ is central in the theory of prime numbers.
$\mu$ is said to be linearly disjoint from $(X, T)$ if
\[
\lim_{N\rightarrow\infty}\frac 1N\sum_{n\leq N} \mu(n)f(T^{n}x)=0
\]
for any $f\in C(X)$ and any $x\in X$.
The M{\"o}bius Disjointness Conjecture of Sarnak \cite{Sarnak, Sarnak2} states that the function $\mu$ is linearly disjoint from every $(X, T)$ whose
entropy is $0$.
This conjecture has been established for many cases, and we refer to the survey paper \cite{Ferenczi} for recent progresses.
An incomplete list for works related to the present paper is:
Bourgain \cite{Bourgain2},  Bourgain-Sarnak-Ziegler \cite{Bourgain3}, Green-Tao \cite{GT}, Liu-Sarnak \cite{Liu, Liu2},
Wang \cite{Wang}, Peckner \cite{Peckner}, Huang-Wang-Ye \cite{HWY}, Litman-Wang \cite{Litman}, and Huang-Liu-Wang \cite{HLW}.

Distal flows are typical examples of zero-entropy flows, see Parry \cite{Parry}. A flow
$(X, T)$ with a compatible metric $d$ is called distal if
\[
\inf_{n\geq0}d(T^{n}x, T^{n}y)>0
\]
whenever $x\neq y. $  According to Furstenberg’s structure theorem of minimal distal flows \cite{Furstenberg2}, skew products are building blocks of distal flows.

An example of distal flow is the skew product $T$ on the $2$-torus $\mathbb{T}^2=(\mathbb{R}/\mathbb{Z})^2$ given by
\begin{equation}\label{jia}
T:(x, y)\mapsto(x+\alpha, y+h(x)),
\end{equation}
where $\alpha\in[0, 1)$ and $h: \mathbb{T}\rightarrow\mathbb{R}$ is a continuous function. For dynamical properties of this skew product, see for example Furstenberg \cite{Furstenberg}. The M{\"o}bius disjointness for the skew product \eqref{jia} was first studied by Liu and Sarnak in \cite{Liu, Liu2}. A result in \cite{Liu} states that, if $h$ is analytic with an additional assumption on its Fourier coefficients, then the M{\"o}bius Disjointness Conjecture is true for the skew product $(\mathbb{T}^2, T). $ This result holds for all $\alpha$, as is not common in the KAM theory. The aforementioned additional assumption was removed in Wang \cite{Wang}. It has been further generalized by Huang-Wang-Ye \cite{HWY} to the case that $h(x)$ is $C^\infty$-smooth.

Another example of distal flow is nilsystem. Let $G$ be a nilpotent Lie group
with a discrete cocomapct subgroup $\Gamma$. The group $G$ acts in a natural way on
the homogeneous space $\Gamma\backslash G. $ Fix $h\in G. $ Then the transformation $T$ given by
$T(\Gamma g)=\Gamma gh$ makes $(\Gamma\backslash G, T)$ a nilsystem.
The M{\"o}bius Disjointness Conjecture for these nilsystems was proved by Green and Tao in \cite{GT}.

Now let $G$ be the $3$-dimensional Heisenberg group with the cocompact discrete subgroup $\Gamma, $ namely
\[
G=
\tiny{
%\scriptsize{
\begin {pmatrix}
1 & \mathbb{R} & \mathbb{R}\\
0 & 1 & \mathbb{R}\\
0 & 0 & 1
\end {pmatrix}, \qquad
\Gamma=\begin {pmatrix}
1 & \mathbb{Z} & \mathbb{Z}\\
0 & 1 & \mathbb{Z}\\
0 & 0 & 1
\end {pmatrix}
}.
\]
Then $\Gamma\backslash G$ is the $3$-dimensional Heisenberg nilmanifold. Let $\mathbb{T}$ be the unit circle.
Huang-Liu-Wang \cite{HLW} define a skew product $S$ on $\mathbb{T}\times\Gamma\backslash G$
\begin{equation}\label{S}
S:(t, \Gamma g)\mapsto
\left(
t +\alpha, \Gamma g
\footnotesize{
\begin{pmatrix}
1 & \varphi_2(t) & \psi(t) \\
0 & 1 & \varphi_1(t) \\
0 & 0 & 1
\end{pmatrix}
}
\right),
\end{equation}
where $\varphi_1$, $\varphi_2$ and $\psi$ are $C^{\infty}$-smooth periodic functions with period $1$.
They proved that the flow $(\mathbb{T}\times\Gamma\backslash G, S)$ is distal,
and in the case that $\varphi_1=\varphi_2=\phi$, the skew product
\[
S_0:(t, \Gamma g)\mapsto
\left(
t+\alpha, \Gamma g
\footnotesize{
\begin{pmatrix}
1 & \phi(t) & \psi(t) \\
0 & 1 & \phi(t) \\
0 & 0 & 1
\end{pmatrix}
}\right)
\]
is linearly disjoint with the M{\"o}bius function $\mu$.

The entropy of $(\mathbb{T}\times\Gamma\backslash G, S)$ is a zero, so  the M{\"o}bius Disjointness Conjecture is expected to hold on $(\mathbb{T}\times\Gamma\backslash G, S)$.
%But their  method in \cite{HLW} does not directly apply to $(\mathbb{T}\times\Gamma\backslash G, S)$, and
%the reason is pointed out in the footnote to (33) in Section 6 \cite{HLW}.
In this manuscript, we generalize the result of Huang-Liu-Wang \cite{HLW} by showing that the M{\"o}bius Disjointness Conjecture holds on $(\mathbb{T}\times\Gamma\backslash G,S)$ if $\alpha$ is rational.
Let the skew product $T$ on $\mathbb{T}\times\Gamma\backslash G$ be given by
\begin{equation}\label{T}
T:(t, \Gamma g)\mapsto
\left(
t+\alpha, \Gamma g
\footnotesize{
\begin{pmatrix}
1 & k\phi(t) & \psi(t) \\
0 & 1 & \phi(t) \\
0 & 0 & 1
\end{pmatrix}
}
\right),
\end{equation}
where  $\alpha\in[0, 1)$, $k\in\mathbb{R}$ and $\phi, \psi$ be $C^\infty$-smooth periodic functions from $\mathbb{R}$ to $\mathbb{R}$ with period $1$.
We also prove that the M{\"o}bius Disjointness Conjecture holds on $(\mathbb{T}\times\Gamma\backslash G,T)$.
%We prove the M{\"o}bius Disjointness Conjecture for the following skew products on
%$\mathbb{T}\times\Gamma\backslash G$,
In particular, the main results of this manuscript are as follows.

\begin{theorem}\label{thmrational}
Let $\mathbb{T}$ be the unit circle and $\Gamma\backslash G$ the $3$-dimensional Heisenberg nilmanifold.
Let $\alpha\in\mathbb{Q}\cap [0, 1)$,  $\varphi_1, \varphi_2, \psi$ be $C^\infty$-smooth periodic functions from $\mathbb{R}$ to $\mathbb{R}$ with period $1$.
% such that
%\begin{equation}\label{1'}
%\int_0^1 \varphi_i(t)dt=0
%\end{equation}
%for $i=1,2$.
Let $S$ be the skew product on $\mathbb{T}\times\Gamma\backslash G$ defined in \eqref{S}.
Then the M\"obius function $\mu$ is  linearly disjoint from $(\mathbb{T}\times\Gamma\backslash G, S)$.
\end{theorem}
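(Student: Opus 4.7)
The strategy is to exploit the rationality of $\alpha$ in order to reduce the skew-product disjointness to the M\"obius--nilsequence orthogonality of Green--Tao \cite{GT}. Write $\alpha=p/Q$ with $p\in\mathbb{Z}$ and $Q\geq 1$, and set
\[
A(t)=\begin{pmatrix} 1 & \varphi_2(t) & \psi(t)\\ 0 & 1 & \varphi_1(t)\\ 0 & 0 & 1\end{pmatrix}.
\]
A direct induction gives $S^n(t_0,\Gamma g_0)=\bigl(t_0+n\alpha,\ \Gamma g_0\prod_{k=0}^{n-1}A(t_0+k\alpha)\bigr)$. Because $\varphi_1,\varphi_2,\psi$ are $1$-periodic and $Q\alpha\in\mathbb{Z}$, the matrix sequence $k\mapsto A(t_0+k\alpha)$ is $Q$-periodic. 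Writing $n=mQ+r$ with $0\leq r<Q$, this yields
\[
S^{mQ+r}(t_0,\Gamma g_0)=(t_0+r\alpha,\ \Gamma g_0\, b^m\, C_r),
\]
where $b:=\prod_{k=0}^{Q-1}A(t_0+k\alpha)\in G$ and $C_r:=\prod_{k=0}^{r-1}A(t_0+k\alpha)\in G$ depend only on $t_0$ (with $C_0$ the identity).

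Given $f\in C(\mathbb{T}\times\Gamma\backslash G)$, I would define, for each $r\in\{0,1,\ldots,Q-1\}$, the continuous function $F_r:\Gamma\backslash G\to\mathbb{C}$ by $F_r(\Gamma y):=f(t_0+r\alpha,\Gamma y\, C_r)$. Splitting the Birkhoff sum into arithmetic progressions modulo $Q$ gives
\[
\frac{1}{N}\sum_{n\leq N}\mu(n)f\bigl(S^n(t_0,\Gamma g_0)\bigr)=\sum_{r=0}^{Q-1}\frac{1}{N}\sum_{\substack{m\geq 0\\ mQ+r\leq N}}\mu(mQ+r)\,F_r(\Gamma g_0\, b^m).
\]
For each fixed $r$, the sequence $m\mapsto F_r(\Gamma g_0\, b^m)$ is a continuous (Lipschitz, after approximation) nilsequence on the Heisenberg nilmanifold $\Gamma\backslash G$, so the inner sum is exactly a M\"obius--nilsequence correlation along the arithmetic progression $n\equiv r\pmod Q$.

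By the Green--Tao theorem \cite{GT}, together with its extension to M\"obius on an arithmetic progression (obtained by expanding $\mathbf{1}_{n\equiv r\,(Q)}$ into Dirichlet characters and applying \cite{GT} to each twisted nilsequence $\chi(n)F_r(\Gamma g_0 b^n)$, or by embedding the twisted sequence into a nilsequence on the product nilmanifold $\Gamma\backslash G\times\mathbb{Z}/Q\mathbb{Z}$), every inner sum is $o(N/Q)$ as $N\to\infty$; summing over the finitely many residues $r$ then gives the theorem. The main technical point is this arithmetic-progression refinement of \cite{GT}, where one must handle residues $r$ with $d=\gcd(r,Q)>1$ by factoring $n=dn'$: if $d$ is non-squarefree the subprogression contributes $0$, while if $d$ is squarefree one has $\mu(n)=\mu(d)\mu(n')$ on the relevant subsequence and the problem reduces again to a standard Green--Tao sum. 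All remaining steps amount to direct manipulations of the matrix product defining $S^n$, and require no new input beyond the periodicity of $\varphi_1,\varphi_2,\psi$.
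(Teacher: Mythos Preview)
Your argument is correct and is genuinely different from the paper's proof. The paper proceeds by first invoking the explicit spectral decomposition of $C(\mathbb{T}\times\Gamma\backslash G)$ from \cite[Proposition~2.3]{HLW} into two families $\mathcal{A}$ (theta-type functions) and $\mathcal{B}$ (functions factoring through the abelianization). For $f\in\mathcal{A}$ it computes $f(S^n(t_0,\Gamma g_0))$ by hand, uses the rationality of $\alpha$ to see that on each residue class $n\equiv b\pmod q$ the resulting expression is $e(P_b(n))$ times a bounded smooth weight with $P_b$ a real polynomial of degree $\le 2$, and then appeals to Hua's classical estimate (Lemma~\ref{lem:1}) for $\sum_{n\equiv b\,(q)}\mu(n)e(P(n))$. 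For $f\in\mathcal{B}$ it projects to a skew product on $\mathbb{T}^3$ and cites \cite[Corollary~3.2]{HLW}.

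Your route bypasses the $\mathcal{A}/\mathcal{B}$ decomposition entirely: the periodicity of the cocycle $k\mapsto A(t_0+k\alpha)$ immediately gives $S^{mQ+r}(t_0,\Gamma g_0)=(t_0+r\alpha,\Gamma g_0 b^m C_r)$, so on each residue class the observable is a genuine Heisenberg nilsequence in $m$, and Green--Tao \cite{GT} (extended to progressions) finishes the job for \emph{every} continuous $f$ at once. This is cleaner and would work verbatim with $\Gamma\backslash G$ replaced by an arbitrary nilmanifold. The paper's approach, by contrast, trades the heavy Green--Tao input for Hua's elementary exponential-sum bound in the $\mathcal{A}$ case, at the cost of needing the explicit representation theory of the Heisenberg nilmanifold. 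Two small remarks on your write-up: additive characters modulo $Q$ are more convenient than Dirichlet characters here (they handle all residues $r$ uniformly and $e(kn/Q)$ is already a nilsequence on $\mathbb{T}$), and to realize $F_r(\Gamma g_0 b^m)$ as a nilsequence in $n$ rather than $m$ one should take the $Q$-th root $\beta=b^{1/Q}\in G$ (which exists uniquely since $G$ is simply connected nilpotent) so that $b^m=\beta^{n-r}$.
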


\begin{theorem}\label{thm1}
Let $\mathbb{T}$ be the unit circle and $\Gamma\backslash G$ the $3$-dimensional Heisenberg nilmanifold.
Let $\alpha\in[0, 1)$, $k\in\mathbb{R}$ and $\phi, \psi$ be $C^\infty$-smooth periodic functions from $\mathbb{R}$ to $\mathbb{R}$ with period $1$ such that
\begin{equation}\label{1}
\int_0^1 \phi(t)dt=0.
\end{equation}
Let the skew product $T$ on $\mathbb{T}\times\Gamma\backslash G$ be given by \eqref{T}.
Then the M\"obius function $\mu$ is  linearly disjoint from $(\mathbb{T}\times\Gamma\backslash G, T)$.
\end{theorem}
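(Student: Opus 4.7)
When $\alpha\in\mathbb{Q}\cap[0,1)$, the skew product $T$ defined in \eqref{T} is the special case of the skew product $S$ in \eqref{S} corresponding to $\varphi_{1}=\phi$ and $\varphi_{2}=k\phi$, so Theorem~\ref{thmrational} yields the desired disjointness immediately. I therefore assume henceforth that $\alpha\in[0,1)\setminus\mathbb{Q}$. A routine induction on $n$ shows that
\[
T^{n}(t,\Gamma g)=\Bigl(t+n\alpha,\ \Gamma g\cdot\begin{pmatrix}1 & k\Phi_{n}(t) & \Psi_{n}(t)+k\,Q_{n}(t)\\ 0 & 1 & \Phi_{n}(t)\\ 0 & 0 & 1\end{pmatrix}\Bigr),
\]
where $\Phi_{n}(t)=\sum_{j=0}^{n-1}\phi(t+j\alpha)$ and $\Psi_{n}(t)=\sum_{j=0}^{n-1}\psi(t+j\alpha)$ are Birkhoff sums, while the quadratic Weyl sum $Q_{n}(t)=\sum_{0\leq i<j<n}\phi(t+i\alpha)\phi(t+j\alpha)$ satisfies the useful identity $Q_{n}(t)=\tfrac{1}{2}\bigl(\Phi_{n}(t)^{2}-\sum_{j<n}\phi(t+j\alpha)^{2}\bigr)$.

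I would next Fourier-expand any continuous $f$ on $\mathbb{T}\times\Gamma\backslash G$ along the circle factor and along the central-character decomposition of $L^{2}(\Gamma\backslash G)$ (the abelianisation being $\mathbb{T}^{2}=\Gamma\backslash G/[G,G]$, with the non-abelian central characters indexed by $\ell\in\mathbb{Z}\setminus\{0\}$ and realised by theta functions in the $(y_{1},y_{2})$-variables). Writing the base point as $(t_{0},\Gamma g_{0})$ with $g_{0}$ having Heisenberg coordinates $(y_{1},y_{2},y_{3})$, a standard density argument reduces Theorem~\ref{thm1} to estimates of the form
\[
\frac{1}{N}\sum_{n\leq N}\mu(n)\,e\!\bigl(m n\alpha+c\,\Phi_{n}(t_{0})+\ell k\,Q_{n}(t_{0})\bigr)\,\Theta_{\ell}\!\bigl(y_{1}+\Phi_{n}(t_{0}),\,y_{2}+k\Phi_{n}(t_{0})\bigr)\longrightarrow 0
\]
uniformly in $(t_{0},\Gamma g_{0})$, where $e(x)=\exp(2\pi i x)$, $m,c\in\mathbb{Z}$, and $\Theta_{\ell}$ is a smooth theta factor (with $\Theta_{0}\equiv 1$ in the abelian case).

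The mean-zero hypothesis $\int_{0}^{1}\phi\,dt=0$ is precisely what confines these phases. Fourier-expanding $\phi(s)=\sum_{\ell\neq 0}\widehat{\phi}(\ell)\,e(\ell s)$ (note $\widehat{\phi}(0)=0$) gives
\[
\Phi_{n}(t)=\sum_{\ell\neq 0}\widehat{\phi}(\ell)\,e(\ell t)\,\frac{e(n\ell\alpha)-1}{e(\ell\alpha)-1},
\]
and a Denjoy--Koksma-style splitting driven by the continued-fraction convergents of $\alpha$, combined with the super-polynomial decay of $\widehat{\phi}$ afforded by $\phi\in C^{\infty}$ (and, more crudely, the unique ergodicity of irrational rotation applied to $\phi$), yields $\sup_{t}|\Phi_{n}(t)|=o(n)$. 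Applying unique ergodicity to $\phi^{2}$ as well gives $\sum_{j<n}\phi(t+j\alpha)^{2}=n\int_{0}^{1}\phi(s)^{2}\,ds+o(n)$ uniformly in $t$, and therefore via the quadratic identity, $Q_{n}(t)=-\tfrac{n}{2}\int_{0}^{1}\phi(s)^{2}\,ds+o(n)$ uniformly as well; the crucial point is that the leading order of $Q_{n}$ is \emph{linear}, not quadratic, in $n$.

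With these estimates in hand, the abelian pieces ($\ell=0$) reduce to M\"obius sums twisted by the almost-periodic phase $c\Phi_{n}(t_{0})$, handled by Davenport's bound $\sum_{n\leq N}\mu(n)e(n\beta)\ll_{A}N(\log N)^{-A}$ combined with the Bourgain--Sarnak--Ziegler orthogonality criterion. For the central pieces ($\ell\neq 0$) the plan is to verify that $(\mathbb{T}\times\Gamma\backslash G,T)$ has subpolynomial measure complexity with respect to every ergodic $T$-invariant measure in the sense of Huang--Wang--Ye \cite{HWY}, and then to invoke their disjointness criterion. This last verification is, I expect, the hardest step: the quadratic twist built from $Q_{n}$ has the \emph{a priori} capacity to spread initially nearby orbits apart at a polynomial rate, and it is exactly the control $\sup_{t}|\Phi_{n}(t)|=o(n)$ forced by $\int\phi=0$, together with the uniform Birkhoff estimate on $\phi^{2}$, that forces the metric complexity function $s_{n}(\nu,\varepsilon)$ on each ergodic component to stay subpolynomial, so that the Huang--Wang--Ye criterion applies and the proof is complete.
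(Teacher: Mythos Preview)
Your reduction of the rational case to Theorem~\ref{thmrational} is correct and matches the paper. For irrational $\alpha$ you correctly identify the Huang--Wang--Ye criterion (Lemma~\ref{HWY}) as the target, but the estimates you offer as input are too weak to reach it, and one of them is in fact false for Liouville $\alpha$. The claim $Q_n(t)=-\tfrac{n}{2}\int\phi^2+o(n)$ requires $\Phi_n(t)^2=o(n)$, i.e.\ $\sup_t|\Phi_n(t)|=o(\sqrt{n})$; but if $q_{i+1}>q_i^{B}$ then Denjoy--Koksma only gives $|\Phi_n|\lesssim n^{1-1/B}$ for $n\asymp q_i^{B-1}$, which is nearly linear for large $B$. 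More fundamentally, even a correct bound $\Phi_n=o(n)$ says nothing about how $|\Phi_m(t)-\Phi_m(t^*)|$ behaves for nearby $t,t^*$ and all $m\le n$, which is what controls the averaged metric $\bar d_n$ and hence $s_n(\nu,\varepsilon)$; an $o(n)$ phase error still permits polynomial orbit separation, so sub-polynomial complexity does not follow.

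The paper's argument supplies exactly the missing Diophantine precision. One fixes $B=8\tau^{-1}+1$, splits $\phi$ (and $\eta=\phi^2$, $\psi$) into a resonant part $\phi_1$ supported on the frequency set $M_1(B)$ and a non-resonant part $\phi_2$ which is a continuous coboundary (Lemma~\ref{HLW4.2}). A conjugacy $R$ built from the transfer functions $g_\phi,g_\eta,g_\psi$ replaces $T$ by a system $T_1$ involving only $\phi_1,\eta_1,\psi_1$; the whole system is conjugated at once, so no abelian/non-abelian splitting of $f$ is needed. The decisive estimate is Lemma~\ref{HLW4.3}: for $q_i\in\mathcal{Q}^\sharp(B)$ one has $|\Phi_{q_i}(t)|\le C_1 q_i^{-B+1}$ (here $\hat\phi(0)=0$ is used). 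This, together with Lipschitz bounds on $\phi_1,\eta_1,\psi_1$, lets one cover $\mathbb{T}\times\Gamma\backslash G$ by an explicit grid of $\asymp q_i^{7}$ points whose $\bar d_{n_i}$-balls (with $n_i=q_i^{B-1}$) already have full measure; since $q_i^{7}/n_i^{\tau}=q_i^{-1}\to 0$, the complexity is sub-polynomial. A separate, easier argument handles the case when $\mathcal{Q}^\sharp(B)$ is finite, where $T$ is conjugate to a translation. Your outline lacks the resonant/non-resonant splitting, the conjugacy, and the quantitative cover construction, and these cannot be recovered from the soft $o(n)$ bounds you invoke.
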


\noindent\textbf{Notations.}
We list some notations that we use in this paper.
We write $e(x)$ for $e^{2\pi ix}$, and write $\|x\| $ for the distance between $x$ and the nearest integer, that is
\[
\|  x\| =\min_{n\in\mathbb{Z}}|x-n|.
\]
For positive $A, $ the notations $B =O(A)$ or $B\ll A$ mean that there exists a positive
constant $c$ such that $|B|\leq cA$.
If the constant $c$ depends on a parameter $b$,
we write $B =O_b (A)$ or $B\ll_b A$.
The notation $A\asymp B$ means that $A\ll B$ and $B\ll A$.
For a topological space $X, $ we use $C(X)$ to denote the set of all continuous complex-valued functions on $X$.
If $X$ is a smooth manifold and $r\geq 1$ is an integer, then we use $C^r(x)$ to denote the set of all $f\in C(X)$ that have continuous $r$-th derivatives.

\vskip 5mm

\section{Proof of Theorem \ref{thmrational} }
In this section, we prove Theorem\ref{thmrational}, where $\alpha$ is a rational number.

The Proposition 2.3 in \cite{HLW} shows that the $\mathbb{C}$-linear subspace spanned by two classes of functions $\mathcal{A}$ and $\mathcal{B}$ is dense in $C(\mathbb{T}\times \Gamma\backslash G)$.

For integers $m, j$ with $0\leq j\leq m-1$, the functions $\psi_{mj}$ and $\psi^*_{mj}$ on $G$ are defined by
\begin{align*}
&
\psi_{mj}\left(
    \footnotesize{
    \begin {pmatrix}
    1 & y & z\\
    0 & 1 & x\\
    0 & 0 & 1
    \end {pmatrix}
    }
    \right)
=e(mz+jx)\sum_{b\in\mathbb{Z}}e^{-\pi(y+b+\frac jm)^2}e(mbx),
\\
&
\psi^*_{mj}\left(
   \footnotesize{
   \begin {pmatrix}
   1 & y & z\\
   0 & 1 & x\\
   0 & 0 & 1
   \end {pmatrix}
   }
   \right)
=ie(mz+jx)\sum_{b\in\mathbb{Z}}e^{-\pi(y+b+\frac jm+\frac 12)^2}e\begin {pmatrix}
\frac 12\begin {pmatrix}
y+b+\frac jm
\end{pmatrix}
+\begin {pmatrix}
mbx
\end {pmatrix}
\end {pmatrix}.
\end{align*}
Then $\psi_{mj}$ and $\psi^*_{mj}$ are $\Gamma$-invariant so that they can be regarded as functions on $\Gamma\backslash G$.

Recall that there is a unique Borel probability measure on $\Gamma\backslash G$ that is invariant
under the right translations, and therefore $L^2(\Gamma\backslash G)$ can be defined.
Let $V_0$ be the subspace of  $L^2(\Gamma\backslash G)$ consisting of all functions $f\in L^2(\Gamma\backslash G)$ satisfying
$$
f\left( \Gamma g
\footnotesize{
\begin {pmatrix}
1 & 0 & z\\
0 & 1 & 0\\
0 & 0 & 1
\end {pmatrix}
}\right)
=f(\Gamma g)
$$
for any $g\in G$ and $z\in \mathbb{R}$.  Set $C_0=V_0\cap C(\Gamma\backslash G)$.

\begin{lemma}(\cite[Proposition 2.3]{HLW})\label{Prop2.3HLW}
Let $\mathcal{A}$ be the subset of $f\in\mathrm{C}(\mathbb{T}\times\Gamma\backslash G)$ satisfying
\[
f:
\left(
t, \Gamma
\footnotesize{
\begin {pmatrix}
1 & y & z\\
0 & 1 & x\\
0 & 0 & 1
\end {pmatrix}
}
\right)
\mapsto
e(\xi_1 t+\xi_2 x+\xi_3 y)
\psi\left(
\Gamma
\footnotesize{
\begin {pmatrix}
1 & y & z\\
0 & 1 & x\\
0 & 0 & 1
\end {pmatrix}
}
\right),
\]
where $\xi_1, \xi_2, \xi_3\in\mathbb{Z}$ and $\psi=\psi_{mj}, \bar{\psi}_{mj}, \psi^*_{mj}$ or $\bar{\psi}^*_{mj}$ for some $0\leqslant j \leqslant m-1$.
Here $\bar{\psi}_{mj}$ and $\bar{\psi}^*_{mj}$ stand for the complex conjugates of $\psi_{mj}$ and  $\psi^*_{mj} $, respectively.
Let $\mathcal{B}$ be subset of $f\in\mathrm{C}(\mathbb{T}\times\Gamma\backslash G)$ satisfying $f((t, \Gamma g))=f_1(t)f_2(\Gamma g)$ with $f_1\in\mathrm{C}(\mathbb{T})$ and $f_2\in\mathrm{C_0}(\Gamma\backslash G)$.
Then the $\mathbb{C}$-linear subspace spanned by $\mathcal{A}\cup \mathcal{B}$ is dense in $C(\mathbb{T}\times \Gamma\backslash G)$.
\end{lemma}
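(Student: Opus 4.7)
The plan is to establish the density by decomposing $L^2(\Gamma\backslash G)$ according to the central character of the right action of the center of $G$, identifying each isotypic component with the explicit basis of theta-like functions appearing in the statement, and finally upgrading $L^2$-density to uniform density by smooth approximation.

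First, I would observe that the center $Z(G)$ of the Heisenberg group consists of the unipotent matrices with $x=y=0$, and $Z(G)\cap\Gamma$ corresponds to integer $z$. So the right action of $Z(G)$ on $\Gamma\backslash G$ factors through $Z(G)/(Z(G)\cap\Gamma)\cong\mathbb{T}$, giving the orthogonal decomposition
\[
L^2(\Gamma\backslash G)=\bigoplus_{m\in\mathbb{Z}}V_m,
\]
where $V_m$ consists of $f$ satisfying $f(\Gamma g\cdot\exp(zE_{13}))=e(mz)f(\Gamma g)$. The $m=0$ summand is exactly the $V_0$ of the statement and descends to $L^2(\mathbb{T}^2)$ in the $(x,y)$-coordinates; Stone--Weierstrass on $\mathbb{T}$ then shows that products $f_1(t)f_2(\Gamma g)$ with $f_2\in C_0$ form a uniformly dense subspace of the $m=0$ part, producing the $\mathcal{B}$-family.

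For each $m\neq 0$, I would verify directly that $\psi_{mj}$ and $\psi^*_{mj}$ with $0\le j\le m-1$ lie in $V_m$ and are $\Gamma$-invariant: summation of the Gaussian kernel $e^{-\pi(y+b+j/m)^2}$ over $b\in\mathbb{Z}$ forces invariance under $y\mapsto y+1$, while the weight $e(mbx)$ paired with the exponential prefactors enforces the Heisenberg-type invariance under $x\mapsto x+1$, $z\mapsto z-y$. Completeness then comes from the Stone--von Neumann theorem: $V_m$ is a finite multiple of the Schr\"odinger representation of central character $m$, and the two families $\{\psi_{mj}\},\{\psi^*_{mj}\}$ correspond to shifted Gaussian ground states that are cyclic under the $x,y$-translations. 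Twisting by the characters $e(\xi_1 t+\xi_2 x+\xi_3 y)$ in the $\mathcal{A}$-family then generates $V_m$ under the combined action, yielding $L^2$-density of the span of $\mathcal{A}\cup\mathcal{B}$ in $L^2(\mathbb{T}\times\Gamma\backslash G)$.

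The main obstacle is the passage from $L^2$-density to uniform density, since for $m\to\infty$ the basis elements are highly oscillatory. I would handle it by first mollifying: any $f\in C(\mathbb{T}\times\Gamma\backslash G)$ is a uniform limit of $C^\infty$ functions obtained by convolving with a smooth bump on $\mathbb{R}\times G$. For smooth $f$, repeated application of the commuting derivations $\partial_t,\partial_x,\partial_y,\partial_z$ inside the pairings produces rapid polynomial decay of the Fourier-theta coefficients in all of $(\xi_1,\xi_2,\xi_3,m)$: the $\partial_z$-derivation introduces a factor of $m$, while the Gaussian factor in $\psi_{mj}$ controls the auxiliary index $j$. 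Absolute convergence then upgrades to uniform convergence, and truncating the expansion gives the required uniform approximation by elements in the span of $\mathcal{A}\cup\mathcal{B}$.
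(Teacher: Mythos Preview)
The paper does not supply its own proof of this lemma; it is quoted verbatim from \cite[Proposition~2.3]{HLW} and used as a black box. So there is no ``paper's proof'' to compare against, only the original reference.

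Your outline follows the natural architecture one expects for such a result (central-character decomposition of $L^2(\Gamma\backslash G)$, identification of $V_0$ with $L^2(\mathbb{T}^2)$, explicit models for $V_m$ with $m\neq 0$, then passage from $L^2$ to uniform density via smoothing). That said, there is a genuine soft spot in your treatment of the $m\neq 0$ blocks. Stone--von~Neumann tells you that $V_m$ is $|m|$ copies of the Schr\"odinger representation and that the Gaussian is cyclic under the \emph{continuous} Heisenberg action. It does \emph{not} tell you that the discrete family $\{e(\xi_2 x+\xi_3 y)\psi_{mj}:\xi_2,\xi_3\in\mathbb{Z}\}$ is total in $V_m$. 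Under the intertwining with $L^2(\mathbb{R})$, multiplication by $e(\xi_2 x)$ and $e(\xi_3 y)$ becomes integer time--frequency shifts of the Gaussian, so what you actually need is completeness of the Gaussian Gabor system on an integer (or half-integer, once you include $\psi^*_{mj}$) lattice. That is true, but it is a separate classical fact (von~Neumann lattices, or a Zak-transform/theta-function argument) and not a consequence of Stone--von~Neumann alone. You should name and invoke that ingredient explicitly.

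A second, smaller issue: your ``upgrade to uniform'' step presumes you can read off rapidly decaying coefficients for smooth $f$. Since the $\mathcal{A}$-family is overcomplete rather than orthonormal in each $V_m$, you need either a frame expansion (available once the half-integer shifts from $\psi^*_{mj}$ push the density above critical) or an intermediate orthonormal basis (e.g.\ Hermite vectors) followed by approximation of each basis element by $\mathcal{A}$-elements. Either works, but the sketch as written elides which route you take.
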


Following \cite{HLW}, we should separately consider the two cases, namely $f\in\mathcal{A}$ and $f\in\mathcal{B}$.
The  case $f\in\mathcal{A}$ will be handled by Fourier analysis and a classical result of Hua \cite{Hua}, which is a generalization of Davenport \cite{Davenport}.

\begin{lemma}(\cite{Hua}) \label{lem:1}
Let $f(x) = \alpha_dx^d + \alpha_{d-1}x^{d-1}+\cdots  + \alpha_1x + \alpha_0 \in \mathbb{R}[x]$ and  $0 \leq a < q$. Then, for arbitrary $A > 0$,
$$
\sum_{\substack{n\leq N  \\  n\equiv a \hskip-1.7mm\mod q}} \mu(n)e(f(n)) \ll_A\frac{N}{\log^A N},
$$
where the implied constant may depend on $A$, $q$ and $d$, but is independent of $\alpha_d, \dots , \alpha_0$.
\end{lemma}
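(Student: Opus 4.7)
The plan is to first remove the congruence condition by orthogonality of additive characters, and then prove the unrestricted bound via a Vaughan-type decomposition combined with Weyl's inequality and the Siegel--Walfisz theorem.

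For the first reduction, I would detect the progression via
\[
\mathbf{1}_{n\equiv a\,(\mathrm{mod}\,q)} \;=\; \frac{1}{q}\sum_{b=0}^{q-1}e\!\left(\frac{b(n-a)}{q}\right).
\]
Substituting, the restricted sum becomes $\tfrac{1}{q}\sum_{b=0}^{q-1}e(-ab/q)\sum_{n\le N}\mu(n)e(g_{b}(n))$, where $g_{b}(x)=f(x)+bx/q$ is a real polynomial of the same degree $d$ with the same leading coefficient $\alpha_{d}$. Since $q$ is fixed, the problem reduces to proving, uniformly over all real polynomials $g$ of degree $d$,
\[
\sum_{n\le N}\mu(n)e(g(n))\;\ll_{A,d}\; \frac{N}{(\log N)^{A}}.
\]

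For this unrestricted estimate I would apply Vaughan's identity with parameters $U=V=N^{1/3}$, writing $\mu$ as a linear combination of a short sum and two bilinear forms. This produces Type I sums of the shape $\sum_{m\le N^{2/3}}a_{m}\sum_{\ell\le N/m}e(g(m\ell))$ and Type II sums $\sum_{N^{1/3}<m\le N^{2/3}}\sum_{N^{1/3}<\ell\le N/m}a_{m}b_{\ell}\,e(g(m\ell))$. In both cases the inner phase $g(m\ell)$, viewed as a polynomial in the running variable, has degree $d$ with leading coefficient a fixed nonzero multiple of $\alpha_{d}$. The Type I sum is handled by a direct application of Weyl's inequality to the inner polynomial sum. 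The Type II sum yields to Cauchy--Schwarz in $m$ followed by a further Weyl-type estimate applied to the difference polynomial $g(m\ell)-g(m\ell')$, which has degree $d$ in $m$ with leading coefficient proportional to $\alpha_{d}(\ell^{d}-\ell'^{d})$.

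The Weyl dichotomy then splits into two regimes for $\alpha_{d}$. In the minor-arc regime, where every rational approximation $|\alpha_{d}-a_{0}/q_{0}|\le 1/q_{0}^{2}$ has denominator $q_{0}\in[(\log N)^{B},\,N^{d}/(\log N)^{B}]$ for a sufficiently large $B=B(A,d)$, Weyl's inequality already delivers cancellation of size $N/(\log N)^{A}$ directly. In the major-arc regime, when $\alpha_{d}=a_{0}/q_{0}+\beta$ with $q_{0}\le(\log N)^{B}$ and $|\beta|$ small, I would split the summation according to $n\bmod q_{0}$, absorb the rational part of $\alpha_{d}n^{d}$ into an additive character, and apply partial summation to the resulting smooth amplitude together with the Siegel--Walfisz theorem for $\sum_{n\le x,\,n\equiv r\,(\mathrm{mod}\,q_{0})}\mu(n)$, which delivers the required savings uniformly for $q_{0}\le(\log N)^{B}$.

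The main obstacle is arranging the argument so that the final bound is \emph{independent} of the lower-order coefficients $\alpha_{d-1},\dots,\alpha_{0}$. This forces us to use on the minor arcs only Weyl's inequality, whose output depends solely on the leading coefficient, and on the major arcs to strip off the oscillation coming from the lower-order coefficients by a partial-summation step before invoking Siegel--Walfisz. The implied constant ends up depending on $A$, $d$, and (through the initial orthogonality step) on $q$, precisely as claimed.
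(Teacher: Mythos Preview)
The paper does not supply a proof of this lemma; it is quoted from Hua's monograph and used as a black box. Your plan is a reasonable modern route to the result, and the orthogonality reduction together with the Vaughan/Weyl treatment of the minor arcs is along the right lines.

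However, the major-arc step as you describe it has a genuine gap. After writing $\alpha_d=a_0/q_0+\beta$, splitting $n$ into residue classes modulo $q_0$, and peeling off the smooth factor $e(\beta n^{d})$ by partial summation, what remains is not $\sum_{n\le x,\,n\equiv r\,(q_0)}\mu(n)$ but
\[
\sum_{\substack{n\le x\\ n\equiv r\ (\mathrm{mod}\ q_0)}}\mu(n)\,e\bigl(\alpha_{d-1}n^{d-1}+\cdots+\alpha_1 n+\alpha_0\bigr).
\]
The coefficients $\alpha_{d-1},\dots,\alpha_1$ are arbitrary reals, so $e(\alpha_{d-1}n^{d-1}+\cdots)$ has, in general, total variation of order $N^{d-1}$ on $[1,N]$ and cannot be ``stripped off'' by partial summation; Siegel--Walfisz alone does not apply. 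The standard repair is to argue by induction on the degree: the displayed sum is exactly the statement of the lemma at degree $d-1$ with modulus $q_0\le(\log N)^{B}$, so the inductive hypothesis (invoked with a larger exponent $A'$ to absorb the partial-summation and $q_0$ losses) closes the loop. The base case $d=1$ is Davenport's theorem, which is indeed handled by Siegel--Walfisz in the manner you outline. Without this inductive structure your argument only covers linear $g$ and does not deliver the claimed uniformity in $\alpha_{d-1},\dots,\alpha_0$.
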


For the case that $f\in\mathcal{A}$ we have the following proposition.

%%%%%%%%%%%%%%%%%%%%%%%%%%%%%%%%%%%%%%%%%%%%%%%%%%%%%%%%%%%%%%%%%%%%%%%%%%%%%%%%%%%%%%
%%%%%%%%%%%%%%%%%%%%%%%%%%%%%%%%%%%%%%%%%%%%%%%%%%%%%%%%%%%%%%%%%%%%%%%%%%%%%%%%%%%%%%%%
%%%%%%%%%%%%%%%%%%%%%%%%%%%%%%%%%%%%%%%%%%%%%%%%%%%%%%%%%%%%%%%%%%%%%%%%%%%%%%%%%%%%%%%%%%%
%%%%%%%%%%%%%%%%%%%%%%%%%%%%%%%%%%%%%%%%%%%%%%%%%%%%%%%%%%%%%%%%%%%%%%%%%%%%%%%%%%%%%%%%%%%

\begin{proposition}\label{prop:2}
Let $(\mathbb{T}\times\Gamma\backslash G, S)$ be as in Theorem \ref{thmrational} with $\alpha\in\mathbb{Q}\cap [0, 1). $
Let $\mathcal{A}$ be as above. Then, for any $(t_0, \Gamma g_0)\in\mathbb{T}\times\Gamma\backslash G, $ any $f\in\mathcal{A}$ and any $A>0$,
\[
\sum_{n\leq N} \mu(n) f(S^n(t_0, \Gamma g_0))\ll_A \frac N{\log^{A}N},
\]where the implied constant depends on $A$ and $\alpha$ only.
\end{proposition}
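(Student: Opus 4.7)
The plan is to use the rationality of $\alpha$ to rewrite $f(S^n(t_0, \Gamma g_0))$, within each residue class modulo the denominator of $\alpha$, as a weighted polynomial exponential to which Hua's Lemma \ref{lem:1} can be applied. Writing $\alpha = p/q$ with $\gcd(p, q) = 1$, I would split the outer sum by $n \equiv a \pmod q$ and set $n = qm + a$. Since $\varphi_1, \varphi_2, \psi$ are $1$-periodic and $q\alpha \in \mathbb{Z}$, the matrix product $\prod_{k=0}^{n-1} U(t_0 + k\alpha)$---where $U(t)$ denotes the matrix appearing in \eqref{S}---factors as $G_0^m\, H_a$, with $G_0 := \prod_{k=0}^{q-1} U(t_0 + k\alpha)$ the full-period matrix and $H_a := \prod_{k=0}^{a-1} U(t_0 + k\alpha)$ a partial-period boundary factor. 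Powering $G_0$ in the Heisenberg group and left-multiplying by $g_0$, the coordinates $(x_n, y_n, z_n)$ of $g_n$ satisfy: $x_n, y_n$ are polynomial of degree $\leq 1$ in $m$, while $z_n$ is polynomial of degree $\leq 2$ in $m$ with leading coefficient $A_1 A_2/2$, where $A_1, A_2$ denote the off-diagonal entries of $G_0$ (depending on $t_0$, $\varphi_1$, $\varphi_2$).

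For $f \in \mathcal A$ involving $\psi_{MJ}$ (using capital $M, J$ to avoid conflict with the summation index $m$), I would apply Poisson summation to the $b$-sum in the definition of $\psi_{MJ}$ to obtain the dual representation
\[
\psi_{MJ}(\Gamma g) = e(M(z - xy)) \sum_{\ell \in \mathbb{Z}} e^{-\pi(Mx - \ell)^2} e(\ell(y + J/M)).
\]
Substituting into $f(S^n(t_0, \Gamma g_0))$ and absorbing the exponentials $e(\xi_1 t_n + \xi_2 x_n + \xi_3 y_n)$, the summand for $n = qm + a$ becomes
\[
\mu(qm+a)\, e(\tilde P(m)) \sum_{\ell \in \mathbb{Z}} e^{-\pi(M A_1 m + \beta - \ell)^2}\, e(\ell(A_2 m + \gamma)),
\]
where $\tilde P \in \mathbb{R}[m]$ is a polynomial of degree $\leq 2$ (leading coefficient $-MA_1 A_2/2$) and $\beta, \gamma \in \mathbb{R}$ depend on $a$, $\xi_i$, $M$, $J$, $(t_0, g_0)$, and $\varphi_i, \psi$. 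The remaining types $\bar\psi_{MJ}, \psi^*_{MJ}, \bar\psi^*_{MJ}$ are handled analogously with sign and phase modifications.

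After interchanging the order of summation, for each $\ell \in \mathbb{Z}$ the inner sum
\[
T_\ell := \sum_{m \leq (N-a)/q} \mu(qm + a)\, e(\tilde P(m) + \ell A_2 m)\, e^{-\pi(MA_1 m + \beta - \ell)^2}
\]
would be controlled by partial summation: Lemma \ref{lem:1} (uniform in the polynomial coefficients) bounds the partial sums of $\mu(qm+a)\,e(\tilde P(m) + \ell A_2 m)$ by $O_A(N/\log^A N)$ uniformly in $\ell$, and the Gaussian weight is unimodal with total variation at most $2$. Hence $|T_\ell| \ll_A N/\log^A N$ uniformly in $\ell$. The main obstacle is then summing over $\ell$: there are $\asymp |MA_1|\, N + 1$ values of $\ell$ for which the Gaussian is non-negligible somewhere on $[0, (N-a)/q]$, so a naive bound $\sum_\ell |T_\ell|$ would be of order $|MA_1| N^2/\log^A N$, far too large. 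To resolve this, I would reparameterize $\ell = \mathrm{round}(MA_1 m + \beta) + k$, converting the Gaussian to $e^{-\pi(\eta_m - k)^2}$ with $\eta_m \in [-1/2, 1/2)$ the centered fractional part of $MA_1 m + \beta$; summing over $k \in \mathbb{Z}$ first would contribute a uniform $O(1)$ factor. On each maximal interval of $m$ on which $\mathrm{round}(MA_1 m + \beta)$ is constant, the effective phase remains polynomial of degree $\leq 2$ in $m$, so Lemma \ref{lem:1} would apply per interval, and reassembling the contributions (crucially using the uniformity of Lemma \ref{lem:1} in the polynomial coefficients) would recover the desired bound $O_A(N/\log^A N)$. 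The technical heart of the argument is this last bookkeeping step, which ensures that no $\log^A N$ factor is lost when many intervals of length $\ll 1/|MA_1|$ are glued together.
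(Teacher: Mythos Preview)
Your reduction to residue classes modulo $q$ and the observation that the Heisenberg coordinates of $g_n$ become polynomials of degree at most $2$ in $m=(n-a)/q$ is correct and parallels the paper. The approaches diverge in how the theta-type sum is handled. The paper does \emph{not} Poisson-summate; it packages the entire $l$-sum as a single bounded analytic function
\[
\omega(u,v)=\sum_{l\in\mathbb{Z}}e^{-\pi(v+y_0+l)^2}e\bigl(l(u+x_0)\bigr),
\]
evaluates it at $(u,v)=(S_1(n;t_0),S_2(n;t_0))$, and treats the resulting $\tilde\omega(n)$ as a bounded smooth weight to be removed \emph{after} applying Lemma~\ref{lem:1} to the remaining polynomial phase $e(P(n,b))$. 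Your Poisson step merely swaps the roles of $x_n$ and $y_n$ in the Gaussian, so it does not change the essential difficulty; in either picture the Gaussian peak drifts linearly in $m$.

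The genuine gap in your proposal is the final step. After writing $\ell=\mathrm{round}(MA_1 m+\beta)+k$, you propose to partition the $m$-range into maximal intervals on which the round function is constant and to apply Lemma~\ref{lem:1} on each. But these intervals have length $O(1/|MA_1|)$, a constant \emph{independent of $N$}; on an interval of bounded length Lemma~\ref{lem:1} yields nothing beyond the trivial bound, and summing over the roughly $|MA_1|N/q$ intervals recovers only $O(N)$ with no logarithmic saving. The uniformity of Lemma~\ref{lem:1} in the polynomial coefficients is irrelevant here---the obstruction is the length of each piece, not the coefficients. In other words, once a M\"obius sum of length $N$ is split into $\asymp N$ pieces of length $O(1)$, no pointwise input about the individual pieces can recover cancellation. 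So the ``bookkeeping'' you flag as the technical heart is not a bookkeeping issue but the entire difficulty, and the argument as outlined does not close. To make progress along the paper's line you should keep the $\ell$-sum intact as a single weight and exploit its regularity (periodicity in one variable, boundedness, smoothness) to pass it through the estimate, rather than dismantling it into individual Gaussians.
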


\begin{proof}
 For simplicity, we only consider a typical $f\in\mathcal{A}$ defined by
\begin{equation}\label{2}
f\left(
t, \Gamma
\footnotesize{
\begin {pmatrix}
1 & y & z\\
0 & 1 & x\\
0 & 0 & 1
\end {pmatrix}
}
\right)
=e(t+x+y+z)\sum_{l\in\mathbb{Z}}e^{-\pi(y+l)^2}e(lx).
\end{equation}
A general $f$ can be treated in the same way.

To compute $f(S^n(t_0, \Gamma g_0))$ via $\eqref{2}$, we define
\begin{align*}
  S_1(n;t) & =\sum_{l=0}^{n-1}\varphi_1(\alpha l+t),\qquad
  S_2(n;t)  =\sum_{l=0}^{n-1}\varphi_2(\alpha l+t), \\
  S_3(n;t) & =\sum_{l=0}^{n-1}\psi(\alpha l+t), \ \qquad
  S_4(n;t)  =\sum_{l=0}^{n-2}\varphi_2(\alpha l+t)\sum_{j=l+1}^{n-1}\varphi_1(\alpha j+t)
\end{align*}
for $n\geqslant1$, $t\in\mathbb{T}$ and set $S_1(0;t)=S_2(0;t)=S_3(0;t)=S_4(0;t)=S_4(1;t)=0$ for simplicity.
A straightforward calculation gives
\begin{equation}\label{3'}
  S^n:(t_0, \Gamma g_0)\mapsto(t_{0}+n\alpha, \Gamma g_n),
\end{equation}
where
\begin{equation}\label{4'}
  g_n:=g_0
  {\footnotesize{
  \begin {pmatrix}
 1 & S_2(n;t_0) & S_3(n;t_0)+S_4(n;t_0)\\
 0 & 1 & S_1(n;t_0)\\
 0 & 0 & 1
 \end {pmatrix}
 }}.
\end{equation}
Now write
\[
g_0=
\footnotesize{
\begin {pmatrix}
 1 & y_0 & z_0\\
 0 & 1 & x_0\\
 0 & 0 & 1
 \end {pmatrix}
 },
 \qquad
 g_n=
 \footnotesize{\begin {pmatrix}
 1 & y_n & z_n\\
 0 & 1 & x_n\\
 0 & 0 & 1
  \end {pmatrix},
  }
  \]
where we may assume  $x_0, y_0, z_0\in[0, 1)$ without loss of generality,  so that $\eqref{4'}$ gives
\begin{align*}
  x_n &=x_0+S_1(n;t_0), \\
  y_n &=y_0+S_2(n;t_0), \\
  z_n &=z_0+y_0S_1(n;t_0)+S_3(n;t_0)+S_4(n;t_0).
\end{align*}
Substituting $\eqref{4'}$ into $\eqref{3'}$ and combining with \eqref{2}, we obtain that
\begin{equation}\label{5'}
\begin{split}
f(S^n(t_0, \Gamma g_0)) &=f\left(
                           t_0+n\alpha, \Gamma
                          \footnotesize{
                           \begin {pmatrix}
                           1 & y_n & z_n\\
                           0 & 1 & x_n\\
                           0 & 0 & 1
                           \end {pmatrix}
                           }
                          \right)
                           \\
                         &=e(t_0+n\alpha+x_n+y_n+z_n)\sum_{l\in\mathbb{Z}}e^{-\pi(y_n+l)^2}e(lx_n).
\end{split}
\end{equation}

To analyze $\eqref{5'}$, we define $\omega:\mathbb{R}\times \mathbb{R}\rightarrow\mathbb{R}$ by
\[
\omega(u,v)=\sum_{l\in\mathbb{Z}}e^{-\pi(v+y_0+l)^2}e\big(l\big(u+x_0\big)\big).
\]
Then $\omega$ is an double periodic analytic function which is bounded on $\mathbb{R}\times \mathbb{R}$. %and hence can be expanded into a Fourier series of the form
%\[
%\omega_n(t)=\sum_{m\in\mathbb{Z}}\widehat{\omega}_n(m)e(mt).
%\]
%Plainly
%\[
%\sum_{m\in\mathbb{Z}}\mid\widehat{\omega}_n(m)\mid\ll 1,
%\]
%and the implied constant is absolute.
With this function $\omega$ we can rewrite \eqref{5'} as
\begin{equation}\label{fSn'}
\begin{split}
f(S^n(t_0, \Gamma g_0)) =\rho&\omega\big(S_1(n;t_0), S_2(n;t_0)\big)\\
     & \times e\Big(n\alpha+(y_0+1)S_1(n;t_0)+S_2(n;t_0)+S_3(n;t_0) +  S_4(n;t_0)  \Big),%\nonumber
\end{split}
\end{equation}
where $\rho:=e(t_0+x_0+y_0+z_0)$. %By the Fourier expansion of $\omega_n(t),$ \eqref{5'} finally takes the form
%\begin{equation*}
% f(S^n(t_0, \Gamma g_0)) =\sum_{m\in\mathbb{Z}}\rho\widehat{\omega}_n(m)e\Big(S'_0(n;t_0)+(y_0+1)S'_1(n;t_0)+S'_2(n;t_0)+S'_3(n;t_0)
%  +n\alpha\Big),
%\end{equation*}
%where $\rho=e((m+1)t_0+x_0+y_0+z_0).$

Recall that $\alpha\in\mathbb{Q}\cap[0, 1)$ in the present situation, so that we can write $\alpha=a/q$ with $0\leq a<q$ and $(a, q)=1. $
Thus for any periodic function $h$ with period $1, $ we have $h(l_1\alpha+t_0)=h(l_2\alpha+t_0)$ whenever $l_1\equiv l_2 \hskip-1.5mm\mod q$.
For any $0\leq b<q$ and any periodic function $h$ with period $1$, define
\[
\gamma(h, b)=\sum_{l=0}^{b-1}h(l\alpha+t_0)
\]
and set $\gamma(h)=\gamma(h, q)/q$.
For any $n\equiv b\hskip-1.5mm\mod q$,  write $n-1\equiv b'\hskip-1.5mm\mod q$, $0\leqslant b, b'<q$. Then
\begin{align*}
  S_1(n;t_0) &=(n-b)\gamma(\varphi_1)+\gamma(\varphi_1, b),  \\
  S_2(n;t_0) &=(n-b)\gamma(\varphi_2)+\gamma(\varphi_2, b),  \\
  S_3(n;t_0) &=(n-b)\gamma(\psi)+\gamma(\psi, b)
\end{align*}
and
\begin{align*}
  S_4(n;t_0) & =\sum_{l=0}^{n-2}\varphi_2(\alpha l+t_0)\sum_{j=l+1}^{n-1}\varphi_1(\alpha j+t_0)  \\
          & =\Big(    \frac{n-1-q-b'}{q}+ \cdots +1\Big)    S_1(q;t_0) S_2(q;t_0)
              +\sum_{l=0}^{b'-1}\varphi_2(\alpha l+t_0)\sum_{j=l+1}^{b'}\varphi_1(\alpha j+t_0)\\
            &\hskip6mm +(n-1-b')q^{-1} \sum_{l=0}^{q-1} \varphi_2(\alpha l+t_0)  \sum_{j=l+1}^{q+b'}\varphi_1(\alpha j+t_0)
\end{align*}
are all real-valued polynomials in $n$ of degree lease than $3$ with coefficients depending on $\alpha$, $b$, $b'$ and $m$.
It follows from this and \eqref{fSn'} that
$$
f(S^n(t_0, \Gamma g_0)) =\rho  \omega\big(S_1(n;t_0), S_2(n;t_0)\big)     e\big( P_{b,b'}(n)  \big),%\nonumber
$$
where $P_{b,b'}(n)$ is a real-valued polynomial in $n$ of degree  lease than $3$  with coefficients depending on $\alpha, b, b'$ and $m$, and then
\begin{equation}\label{6'}
\sum_{n\leq N}\mu(n)f(S^n(t_0, \Gamma g_0))
=\rho    \sum_{b=0}^{q-1}\sum_{\substack{n\leq N \\ n\equiv b \hskip-1.5mm\mod q}}             \mu(n)\tilde{\omega}(n) e(P(n, b)),
\end{equation}
where $\tilde{\omega}(n)=\omega\big(S_1(n;t_0), S_2(n;t_0)\big)$ is a bounded smooth weight.
Hence, by Lemma \ref{lem:1} we have
\begin{equation}\label{7'}
\sum_{\substack{n\leq N \\ n\equiv b \hskip-1.8mm\mod q}}\mu(n)e(P(n, b))\ll_A\frac N{\log^{A}N}
\end{equation}
for arbitrary $A>0$, where the implied constant depending on $q$ (hence on $\alpha$) and $A$ only.
%Notice that $\hat{\omega}$ is a bounded smooth weight. So by $\eqref{7}$ and partial summation,
Substituting this  to \eqref{6'} we obtain the desired estimate.
\end{proof}

The other case $f\in\mathcal{B}$ can be reduced to the case of skew products on $\mathbb{T}^3$ which is already known as   \cite[Corollary 3.2]{HLW}.

\begin{lemma}(\cite[Corollary 3.2]{HLW})\label{Cor3.2HLW}
 Let $\alpha\in \mathbb{R}$, and let $h_1, h_2: \mathbb{T}\rightarrow \mathbb{R}$ be $C^{\infty}$-smooth functions, $T: \mathbb{T}^3\rightarrow \mathbb{T}^3$ be given by
 $T(x,y,z)=(x+\alpha, y+h_1(x), z+h_2(x))$. Then the  M{\"o}bius Disjointness Conjecture holds for $(\mathbb{T}^3, T)$.
\end{lemma}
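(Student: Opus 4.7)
The plan is to reduce the three-dimensional skew product $(\mathbb{T}^3, T)$ to a two-dimensional Anzai skew product on $\mathbb{T}^2$ via Fourier expansion along the last two coordinates, and then invoke the known Möbius disjointness for two-dimensional skew products with $C^\infty$ fiber data, due to Wang \cite{Wang} and extended by Huang-Wang-Ye \cite{HWY}.

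First I would apply the Stone-Weierstrass theorem: trigonometric polynomials are uniformly dense in $C(\mathbb{T}^3)$, so it suffices to treat
$f(x,y,z) = e(\xi_1 x + \xi_2 y + \xi_3 z)$ for an arbitrary fixed $(\xi_1,\xi_2,\xi_3) \in \mathbb{Z}^3$. A straightforward induction shows
$$T^n(x,y,z) = \bigl(x + n\alpha,\; y + H_1(n;x),\; z + H_2(n;x)\bigr), \qquad H_j(n;x) := \sum_{k=0}^{n-1} h_j(x + k\alpha).$$
Setting $h := \xi_2 h_1 + \xi_3 h_2 \in C^\infty(\mathbb{T})$ and $H(n;x) := \sum_{k=0}^{n-1} h(x + k\alpha)$, this gives
$$f(T^n(x,y,z)) = e(\xi_1 x + \xi_2 y + \xi_3 z)\, e(\xi_1 n\alpha + H(n;x)).$$

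Next I would introduce the two-dimensional skew product $R : \mathbb{T}^2 \to \mathbb{T}^2$ defined by $R(x,w) := (x+\alpha,\; w + h(x))$, together with the continuous test function $g(x,w) := e(\xi_1 x + w)$. Since $R^n(x,0) = (x+n\alpha,\; H(n;x))$, one verifies the factorisation
$$\sum_{n \le N} \mu(n)\, f(T^n(x,y,z)) \;=\; e(\xi_2 y + \xi_3 z) \sum_{n \le N} \mu(n)\, g(R^n(x,0)).$$
Hence the whole problem reduces to showing that the right-hand Möbius sum is $o(N)$. When $h$ is not identically constant, this is exactly the statement of Möbius disjointness for the $C^\infty$ Anzai skew product $(\mathbb{T}^2, R)$ evaluated on the orbit of $(x,0)$, which is supplied by Huang-Wang-Ye \cite{HWY}. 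In the degenerate case where $\xi_2 h_1 + \xi_3 h_2$ is a constant (in particular when $\xi_2 = \xi_3 = 0$), the sum collapses to $\sum_{n \le N} \mu(n) e(\beta n)$ for some $\beta \in \mathbb{R}$, and one appeals directly to Davenport's classical bound.

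The hard analytic content sits entirely in the 2D result of Huang-Wang-Ye, which provides the required control on the Möbius correlations with the nonlinear Birkhoff sums $H(n;x)$. On my side only a bookkeeping step remains: combining the two fiber coordinates $(y,z)$ into the single effective fiber variable $\xi_2 h_1 + \xi_3 h_2 \in C^\infty(\mathbb{T})$, and absorbing the base rotation by $\xi_1\alpha$ into the test function $g$ on $\mathbb{T}^2$. I do not anticipate any additional analytic obstacle, and the rational-$\alpha$ sub-case admits the alternative treatment via Hua's lemma already used in the proof of Proposition \ref{prop:2}.
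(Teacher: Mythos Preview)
The paper does not supply its own proof of this lemma; it is quoted as \cite[Corollary 3.2]{HLW} and used as a black box in Proposition~\ref{prop:1}. Your argument is a correct way to fill in that citation: the Stone--Weierstrass reduction to characters $e(\xi_1 x+\xi_2 y+\xi_3 z)$ collapses the two fiber coordinates into the single $C^\infty$ cocycle $h=\xi_2 h_1+\xi_3 h_2$, and the resulting Anzai skew product $R(x,w)=(x+\alpha,w+h(x))$ on $\mathbb{T}^2$ is covered by the M\"obius disjointness theorem of Huang--Wang--Ye \cite{HWY}. The factorisation $f(T^n(x,y,z))=e(\xi_2 y+\xi_3 z)\,g(R^n(x,0))$ is exactly right, and since \cite{HWY} applies to arbitrary $C^\infty$ cocycles (including constants), your separate Davenport fallback for constant $h$ is correct but not strictly necessary. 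The reduction is in the same spirit as the paper's own Proposition~\ref{prop:1}, which pushes $z$-independent observables on $\mathbb{T}\times\Gamma\backslash G$ down to $\mathbb{T}^3$; you are simply iterating that idea one step further, from $\mathbb{T}^3$ down to $\mathbb{T}^2$.
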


For the case that $f\in\mathcal{B}$ we have the following proposition.

\begin{proposition}\label{prop:1}
Let $\mathcal{B}\subset C(\mathbb{T}\times\Gamma\backslash G)$ as above. Let $S$ be as in Theorem \ref{thmrational} and let $f\in\mathcal{B}$. Then, for any $(t_0, \Gamma g_0)\in\mathbb{T}\times\Gamma\backslash G, $
\[
\lim_{N\rightarrow\infty}\frac 1N\sum_{n=1}^\mathrm{N}\mu(n)f(S^n(t_0, \Gamma g_0))=0.
\]
\end{proposition}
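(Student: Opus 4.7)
The plan is to reduce $f\in\mathcal{B}$ to the $\mathbb{T}^3$ skew-product setting of Lemma \ref{Cor3.2HLW}. The crucial observation is that $f_2\in C_0$ is right-invariant under the center of $G$, i.e.\ under the subgroup of matrices $\footnotesize\begin{pmatrix}1&0&z\\0&1&0\\0&0&1\end{pmatrix}$. Hence the value of $f_2(\Gamma g_n)$ does not depend on the $z$-entry $z_n$ at all, and $f_2$ descends to a continuous function of the residue classes of $x_n$ and $y_n$ modulo $1$. Concretely, I would define $\tilde f_2\in C(\mathbb{T}^2)$ by
\[
\tilde f_2(x,y) \;=\; f_2\!\left(\Gamma\footnotesize\begin{pmatrix}1 & y & 0 \\ 0 & 1 & x \\ 0 & 0 & 1\end{pmatrix}\right),
\]
so that $f_2(\Gamma g_n)=\tilde f_2(x_n,y_n)$ independently of $z_n$.

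Next, I would compare the iteration of $S$ with a skew product on $\mathbb{T}^3$. Using the formulas for $x_n,y_n$ recorded in the proof of Proposition \ref{prop:2}, we have $x_n=x_0+S_1(n;t_0)$ and $y_n=y_0+S_2(n;t_0)$, which are exactly the coordinates one obtains from iterating the map
\[
\tilde T:\mathbb{T}^3\longrightarrow\mathbb{T}^3,\qquad \tilde T(t,x,y)=\bigl(t+\alpha,\;x+\varphi_1(t),\;y+\varphi_2(t)\bigr),
\]
starting from $(t_0,x_0,y_0)$. Writing $F(t,x,y):=f_1(t)\,\tilde f_2(x,y)$, which is continuous on $\mathbb{T}^3$, one checks that
\[
f\!\bigl(S^n(t_0,\Gamma g_0)\bigr)\;=\;f_1(t_0+n\alpha)\,\tilde f_2(x_n,y_n)\;=\;F\!\bigl(\tilde T^n(t_0,x_0,y_0)\bigr).
\]

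Finally, $\tilde T$ is precisely a map of the form covered by Lemma \ref{Cor3.2HLW} (with $h_1=\varphi_1$, $h_2=\varphi_2$ both $C^\infty$), and so the M\"obius Disjointness Conjecture holds for $(\mathbb{T}^3,\tilde T)$. Applying the conclusion of that lemma to the continuous function $F$ and the point $(t_0,x_0,y_0)$ yields
\[
\lim_{N\to\infty}\frac1N\sum_{n=1}^{N}\mu(n)\,F(\tilde T^n(t_0,x_0,y_0))\;=\;0,
\]
which by the identification above is exactly the claimed limit. There is essentially no obstacle; the only point requiring care is the verification that $\tilde f_2$ is well-defined (i.e.\ $\Gamma$-invariance descends to $\mathbb{Z}^2$-periodicity in $x,y$), which is immediate from the definition of $C_0$ together with the structure of $\Gamma$.
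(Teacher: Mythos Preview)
Your proof is correct and follows essentially the same approach as the paper: both arguments observe that $f_2\in C_0$ is independent of the $z$-coordinate, push $f$ down to a continuous function on $\mathbb{T}^3$, identify the induced dynamics there with the skew product $\tilde T(t,x,y)=(t+\alpha,\,x+\varphi_1(t),\,y+\varphi_2(t))$, and then invoke Lemma~\ref{Cor3.2HLW}. The only cosmetic difference is that the paper packages this via an explicit factor map $\pi:\mathbb{T}\times\Gamma\backslash G\to\mathbb{T}^3$ satisfying $\pi\circ S=\tilde S\circ\pi$, whereas you track the orbit coordinates $x_n,y_n$ directly.
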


\begin{proof}
 Let $\tilde{S}:\mathbb{T}^3\rightarrow\mathbb{T}^3$ be given by
\[
\tilde{S}:(t, x, y)\mapsto(t+\alpha, x+\varphi_1(t), y+\varphi_2(t)),
\]
and  $\pi$ be the projection of $\mathbb{T}\times\Gamma\backslash G$ onto $\mathbb{T}^3$ given by
\[
\pi: \left(
t, \Gamma
 \footnotesize{
  \begin {pmatrix}
  1 & y & z\\
  0 & 1 & x\\
  0 & 0 & 1
  \end {pmatrix}
  }
\right)
\mapsto(t, x, y).
\]
Then we have $\pi\circ S=\tilde{S}\circ\pi, $ and hence $(\mathbb{T}^3, \tilde{S})$ is a topological factor of $(\mathbb{T}\times\Gamma\backslash G, S)$.

Since $f\in\mathcal{B}$, there are some $f_1\in C(\mathbb{T})$ and $f_2\in V_0\cap C(\Gamma\backslash G)$ such that $f(t, \Gamma g)=f_1(t)f_2(\Gamma g)$.
Then for any $z'\in\mathbb{R}$,
\[
f_2\left(
\Gamma
\footnotesize{
   \begin {pmatrix}
   1 & y & z\\
   0 & 1 & x\\
   0 & 0 & 1
   \end {pmatrix}
   }
\right)
=f_2\left(
\Gamma
\footnotesize{
\begin {pmatrix}
1 & y & z+z'\\
0 & 1 & x\\
0 & 0 & 1
\end {pmatrix}
}
\right)
\]
which shows that  $f_2$ is independent of the $z$-component and induces a well-defined continuous function $\tilde{f_2}\in C(\mathbb{T}^2)$ given by
\[
\tilde{f_2}(x, y)=f_2
\left(
\Gamma
\footnotesize{
\begin {pmatrix}
1 & y & z\\
0 & 1 & x\\
0 & 0 & 1
\end {pmatrix}
}
\right)
\]
for any $z\in\mathbb{R}. $ Define $\tilde{f}(t, x, y)\in C(\mathbb{T}^3)$ by
\[
\tilde{f}(t, x, y)=f_1(t)\tilde{f_2}(x, y).
\]
Then $f(t, \Gamma g)=\tilde{f}\circ\pi(t, \Gamma g)$ for any $(t, \Gamma g)\in\mathbb{T}\times\Gamma\backslash G. $ Hence
\[
f(S^n(t_0, \Gamma g_0))=\tilde{f}\circ\pi\circ S^n(t_0, \Gamma g_0)=\tilde{f}\circ \tilde{S}^n\circ\pi(t_0, \Gamma g_0)
\]
for any $n\geq1$, and the sequence $\{f(S^n(t_0, \Gamma g_0))\}_{n\geq1}$ is  observed in $(\mathbb{T}^3, \tilde{S})$.
The desired result follows from Lemma \ref{Cor3.2HLW}.
\end{proof}

\noindent\emph{Proof of Theorem \ref{thmrational}}.\
Combining Propositions \ref{prop:2} and \ref{prop:1} we get that Theorem \ref{thmrational} holds.\qed
%%%%%%%%%%%%%%%%%%%%%%%%%%%%%%%%%%%%%%%%%%%%%%%%%%%%%%%%%%%%%%%%%%%%%%%%%%%%%%%%%%%%%%
%%%%%%%%%%%%%%%%%%%%%%%%%%%%%%%%%%%%%%%%%%%%%%%%%%%%%%%%%%%%%%%%%%%%%%%%%%%%%%%%%%%%%%%%
%%%%%%%%%%%%%%%%%%%%%%%%%%%%%%%%%%%%%%%%%%%%%%%%%%%%%%%%%%%%%%%%%%%%%%%%%%%%%%%%%%%%%%%%%%%
%%%%%%%%%%%%%%%%%%%%%%%%%%%%%%%%%%%%%%%%%%%%%%%%%%%%%%%%%%%%%%%%%%%%%%%%%%%%%%%%%%%%%%%%%%%

\section{Measure complexity}

To prove Theorem \ref{thm1} for irrational $\alpha$, we will use the criterion given by Huang-Wang-Ye in \cite{HWY}.
To do this, we need the  concept of measure complexity.
In this section, we will collect some concepts and facts from  \cite{HWY} without proof.

Let $(X, T)$ be a flow, and $M(X, T)$ the set of all $T$-invariant Borel probability measures on $X$.
A metric $d$ on $X$ is said to be compatible if the topology induced by $d$ is the same as the given topology on $X$. For a compatible metric $d$ and an $n\in\mathbb{N}, $ define
\begin{equation}\label{bard}
\bar{d}_n(x, y)=\frac 1n\sum_{j=0}^{n-1}d(T^{j}x, T^{j}y)
\end{equation}
for $x, y\in X. $ Then, for $\varepsilon>0$, let
\begin{equation*}
B_{\bar{d}_n}(x, \varepsilon)=\{y\in X:\bar{d}_n(x, y)<\varepsilon\},
\end{equation*}
with which we can further define, for $\rho\in M(X, T), $
$$
     s_n(X, T, d, \rho, \varepsilon)
     =\min\{m\in\mathbb{N}:\exists x_1, . . . , x_m\in X,  \ \textrm{s. t.}\  \rho(\cup_{j=1}^{m}B_{\bar{d}_n}(x_j, \varepsilon))>1-\varepsilon \}.
$$
%\begin{multline*}
%     s_n(X, T, d, \rho, \varepsilon)  \\
%     =\min\{m\in\mathbb{N}:\exists x_1, . . . , x_m\in X, s. t.\  \rho(\bigcup_{j=0}^{m}B_{\bar{d}_n}(x_j, \varepsilon)>1-\varepsilon \}.
%\end{multline*}
Let $(X, d, T, \rho)$ be as above, and let $\{u(n)\}_{n\geq 1}$ be an increasing sequence satisfying $1\leq u(n)\rightarrow \infty$ as $n\rightarrow\infty. $ We say that the measure complexity of $(X, d, T, \rho)$ is weaker than $u(n)$ if
\begin{equation*}
\liminf_{n\rightarrow\infty}\frac{s_n(X, T, d, \rho, \varepsilon)}{u(n)}=0
\end{equation*}
for any $\varepsilon>0$. In view of Lemma \ref{HWYprop2.2} this property is independent of the
choice of compatible metrics. Hence we can say instead that the measure complexity
of $(X, T, \rho)$ is weaker than $u(n). $ We say the measure complexity of $(X, T, \rho)$ is sub-polynomial if the measure complexity of $(X, T, \rho)$ is weaker than $n^\tau$ for any $\tau>0$.

We will use the  equivalent condition provided by Huang-Wang-Ye \cite{HWY} to derive that the M{\"o}bius Disjointness Conjecture holds for our flow $(\mathbb{T}\times\Gamma\backslash G, T)$ as in Theorem \ref{thm1} for $\alpha$ irrational.
The following lemma is the main theorem of \cite{HWY} which gives a criterion of the M{\"o}bius Disjointness Conjecture.
\begin{lemma}(\cite[Theorem 1.1]{HWY})\label{HWY}
  If the measure complexity of $(X, T, \rho)$ is sub-polynomial for any $\rho\in M(X; T)$,
  then the M{\"o}bius Disjointness Conjecture holds for $(X, T)$.
\end{lemma}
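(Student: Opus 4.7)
My plan is to follow the approach of Huang-Wang-Ye, combining the Matom{\"a}ki-Radziwi{\l}{\l} theorem on averages of the M{\"o}bius function in short intervals with the approximation of typical orbits furnished by sub-polynomial complexity. First, by the ergodic decomposition of $T$-invariant measures, it suffices to verify
\[
\frac{1}{N}\sum_{n\le N} \mu(n)\, f(T^n x) \longrightarrow 0
\]
for every $f \in C(X)$, every ergodic $\rho \in M(X,T)$, and every $x$ in a $\rho$-full-measure set; non-generic $x$ are handled by passing to the orbit closure, which inherits sub-polynomial complexity. A uniform approximation then reduces further to bounded real-valued $f$.

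Next, I would use the complexity bound to discretize the orbit. Fix small $\tau, \varepsilon > 0$ and a scale $n = n(N) \to \infty$ with $n = N^{o(1)}$. By the sub-polynomial complexity hypothesis there exist $x_1, \dots, x_m \in X$ with $m \le n^\tau$ and
\[
\rho\Bigl(\bigcup_{j=1}^m B_{\bar d_n}(x_j, \varepsilon)\Bigr) > 1 - \varepsilon.
\]
Partition $\{1, \dots, N\}$ into consecutive blocks $I_k$ of length $n$. For a $\rho$-generic $x$, the Birkhoff ergodic theorem applied to the indicator of the union above yields that for a $(1 - O(\varepsilon))$ fraction of blocks $k$, the shifted orbit segment $(T^{kn+i}x)_{0 \le i < n}$ is $\bar d_n$-close to $(T^i x_{j(k)})_{0 \le i < n}$ for some index $j(k)$. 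Uniform continuity of $f$ then permits replacing $f(T^{kn+i}x)$ by $f(T^i x_{j(k)})$ with total error $O(\varepsilon)$.

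The third step is to apply the Matom{\"a}ki-Radziwi{\l}{\l} theorem in averaged form: for any uniformly bounded family $\{g_k\}_k$ of sequences supported on $[0, n)$,
\[
\frac{1}{N}\sum_k \Bigl| \sum_{i<n} \mu(kn+i)\, g_k(i) \Bigr| \longrightarrow 0
\]
as $n, N \to \infty$. Taking $g_k(i) = f(T^i x_{j(k)})$ and noting that there are at most $m \le n^\tau$ distinct such sequences (so a union bound or Cauchy-Schwarz handles the twist at a cost of $n^\tau$) gives that the approximating average is $o(1)$, whence the desired limit.

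The main obstacle is the quantitative matching of parameters: $n$ must be chosen slowly enough that both the approximation error coming from the $\bar d_n$-balls and the $n^\tau$ loss from the Matom{\"a}ki-Radziwi{\l}{\l} step are absorbed, while still growing fast enough for the short-interval M{\"o}bius cancellation to be effective. A diagonalization over $\tau \to 0^+$ and $\varepsilon \to 0^+$, with $n = n(N, \tau, \varepsilon)$ chosen appropriately, then concludes the proof. The decisive role of \emph{sub-polynomial} complexity appears precisely here: only the $n^\tau$ loss valid for arbitrarily small $\tau > 0$ can be beaten by the Matom{\"a}ki-Radziwi{\l}{\l} savings, which is what makes the hypothesis of sub-polynomiality exactly the right strength for the argument.
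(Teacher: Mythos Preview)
The paper does not give a proof of this lemma; it is quoted as \cite[Theorem~1.1]{HWY} and used as a black box. Your outline is a reconstruction of the Huang--Wang--Ye argument, and the overall architecture --- sub-polynomial complexity supplies a small family of reference orbit segments, short-interval M{\"o}bius cancellation \`a la Matom{\"a}ki--Radziwi{\l}{\l}(--Tao) kills the correlation along each segment, and the $n^\tau$ count of reference segments is absorbed by taking $\tau$ arbitrarily small --- is indeed the content of \cite{HWY}.

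That said, your opening reduction has a genuine gap. M{\"o}bius disjointness requires the limit for \emph{every} $x\in X$, and the ergodic decomposition does not reduce this to $\rho$-generic $x$: an arbitrary $x$ need not be generic for any ergodic (or even any invariant) measure, and ``passing to the orbit closure'' does not cure this, since $x$ may still be non-generic in the subsystem, leaving you in an infinite regress. The correct device in \cite{HWY} is \emph{quasi-genericity}: for any $x$, choose a subsequence $N_k\to\infty$ along which the empirical measures $\tfrac{1}{N_k}\sum_{n\le N_k}\delta_{T^n x}$ converge weak-$*$ to some $\rho\in M(X,T)$; run your complexity and approximation steps for this particular $\rho$; and replace the appeal to Birkhoff by the weak-$*$ convergence itself (approximate the indicator of $\bigcup_j B_{\bar d_n}(x_j,\varepsilon)$ from above and below by continuous functions to see that the density of $m\le N_k$ with $T^m x$ in the good set exceeds $1-O(\varepsilon)$). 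This handles every $x$ and every subsequential limit, hence the full sequence. A second, smaller point: the displayed ``Matom{\"a}ki--Radziwi{\l}{\l} in averaged form'' is false as written for arbitrary $k$-dependent $g_k$ (take $g_k(i)=\mu(kn+i)$); the argument only goes through because, as you note immediately afterwards, the $g_k$ range over at most $n^\tau$ fixed sequences. With these two corrections your sketch matches the proof in \cite{HWY}.
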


Let $(X, T)$ and $(Y, S)$ be two flows, $d$ and $d'$  the metrics on $X$ and $Y$, respectively.
Let $\rho\in M(X, T)$, $\nu \in M(Y, S)$, $\mathcal{B}_X$ and $\mathcal{B}_Y$ be the Borel $\sigma$-algebras of $X$ and $Y$, respectively.
We say $(X, \mathcal{B}_X, T, \rho)$ is measurably isomorphic to $(Y, \mathcal{B}_Y, S, \nu)$,
if there exist $X'\subset X$, $Y'\subset Y$ with $\rho(X') = \nu(Y') = 1$, $TX' \subset X'$, $SY'\subset Y'$,
and an invertible measure-preserving map $\theta: X'\rightarrow Y'$
such that $\theta\circ T(x) = S\circ \theta(x)$ for any $x\in X'$.
The following lemma is important when calculating the measure complexity.

\begin{lemma} (\cite[Proposition 2.2]{HWY})\label{HWYprop2.2}\label{HLW5.2}
Let $\{u(n)\}_{n\geq1}$ be an increasing sequence satisfying $1\leq u(n)\rightarrow \infty$ as $n\rightarrow 1$.
Assume that $(X, \mathcal{B}_X, T, \rho)$ is measurably isomorphic to $(Y, \mathcal{B}_Y, S, \nu)$.
Then the measure complexity of $(X, d, T, \rho)$ is weaker than $u(n)$ if and only if the measure
complexity of $(Y, d', S, \nu)$ is weaker than $u(n)$.
\end{lemma}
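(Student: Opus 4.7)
The plan is to exploit the symmetry of the hypothesis: it suffices to prove one direction, namely that if the measure complexity of $(X,d,T,\rho)$ is weaker than $u(n)$, then so is that of $(Y,d',S,\nu)$. The strategy is to transfer a near-optimal $\bar d_n$-covering of $X$ to a $\bar d'_n$-covering of $Y$ through $\theta$. The obstacle is that $\theta$ is only measurable, whereas the covering is defined topologically.

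First, fix $\varepsilon>0$. Applying Lusin's theorem to $\theta:X'\to Y'$, one obtains a compact set $K\subset X'$ with $\rho(K)>1-\varepsilon^2$ on which $\theta$ restricts to a continuous, hence uniformly continuous, map from $(K,d)$ to $(Y,d')$. Thus there exists $\eta=\eta(\varepsilon)>0$ such that $x,y\in K$ with $d(x,y)<\eta$ forces $d'(\theta x,\theta y)<\varepsilon$. Next, apply the $L^1$ mean ergodic theorem to the indicator $\mathbf{1}_K$ to see that for all sufficiently large $n$ the set
\[
E_n=\Big\{x\in X':\ \frac{1}{n}\sum_{j=0}^{n-1}\mathbf{1}_K(T^jx)\ge 1-\varepsilon\Big\}
\]
satisfies $\rho(E_n)>1-2\varepsilon$.

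For the transfer, set $\varepsilon_1=\eta\varepsilon$ and choose a family $\{x_1,\dots,x_m\}$ in $X$ realizing $m=s_n(X,T,d,\rho,\varepsilon_1)$, so that $\rho\big(\bigcup_i B_{\bar d_n}(x_i,\varepsilon_1)\big)>1-\varepsilon_1$. For each $i$ with $B_{\bar d_n}(x_i,\varepsilon_1)\cap E_n\ne\emptyset$, pick $x_i'$ in this intersection and set $y_i=\theta(x_i')\in Y$. For any $x\in E_n\cap B_{\bar d_n}(x_i,\varepsilon_1)$ the triangle inequality gives $\bar d_n(x,x_i')\le 2\varepsilon_1$, so Markov's inequality implies $d(T^jx,T^jx_i')\ge\eta$ holds on at most a $2\varepsilon$-fraction of indices $j<n$. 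Combining with $x,x_i'\in E_n$, the set of indices $j$ for which \emph{both} $T^jx,T^jx_i'\in K$ and $d(T^jx,T^jx_i')<\eta$ has density at least $1-4\varepsilon$; on this set the intertwining $\theta\circ T=S\circ\theta$ together with the uniform continuity of $\theta|_K$ yields $d'(S^j\theta x,S^jy_i)<\varepsilon$, while on the complement we bound trivially by $\mathrm{diam}(Y,d')$. Hence $\bar d'_n(\theta x,y_i)\le\varepsilon+4\varepsilon\cdot\mathrm{diam}(Y,d')$, which is below any prescribed $\varepsilon_0>0$ provided $\varepsilon$ was chosen small enough at the outset. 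Pushing $\rho$ forward to $\nu$, the $\bar d'_n$-balls of radius $\varepsilon_0$ around the $y_i$ cover $\nu$-mass at least $1-3\varepsilon>1-\varepsilon_0$, giving
\[
s_n(Y,S,d',\nu,\varepsilon_0)\le s_n(X,T,d,\rho,\varepsilon_1)
\]
for all large $n$. Dividing by $u(n)$ and taking $\liminf_{n\to\infty}$ yields the claim.

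The main obstacle is bridging the purely measure-theoretic nature of $\theta$ and the topological pseudometrics $\bar d_n,\bar d'_n$. Lusin's theorem handles the mismatch on a set of large measure, but one must then argue via a mean/pointwise ergodic theorem that most orbit iterates land in that set, and perform a careful Markov-type partition of the index range $\{0,1,\dots,n-1\}$ into a "good" majority controlled by uniform continuity and a "bad" minority absorbed by the diameter; balancing the four small parameters $\varepsilon,\eta,\varepsilon_1,\varepsilon_0$ is where the bookkeeping is most delicate.
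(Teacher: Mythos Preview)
The paper does not prove this lemma at all: it is quoted verbatim from \cite[Proposition~2.2]{HWY} and used as a black box, so there is no ``paper's own proof'' to compare against. Your argument---Lusin's theorem to make $\theta$ uniformly continuous on a large compact set $K$, the $L^1$ ergodic theorem to ensure most iterates of a typical point land in $K$, then a Markov-type splitting of the time indices into a good majority (controlled by uniform continuity) and a bad minority (absorbed by $\mathrm{diam}(Y)$)---is exactly the standard route and is essentially what appears in \cite{HWY}.

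Your sketch is correct, with two small points of bookkeeping worth tightening. First, the claim $\rho(E_n)>1-2\varepsilon$ for all large $n$ needs one extra line: the mean ergodic theorem gives $A_n:=\tfrac1n\sum_{j<n}\mathbf{1}_K\circ T^j\to f^*:=\mathbb{E}[\mathbf{1}_K\mid\mathcal{I}]$ in $L^1$, and Markov applied to $1-f^*$ (using $\int(1-f^*)\,d\rho<\varepsilon^2$) gives $\rho(\{f^*<1-\varepsilon\})<\varepsilon$; one more Markov step on $|A_n-f^*|$ then yields the stated bound for $E_n$. Second, the order of quantifiers should be made explicit: given the target radius $\varepsilon_0$, first choose $\varepsilon$ small enough that $\varepsilon(1+4\,\mathrm{diam}(Y))<\varepsilon_0$ and $3\varepsilon<\varepsilon_0$, then build $K$, $\eta$, $\varepsilon_1=\eta\varepsilon$, and finally let $n\to\infty$; since the hypothesis on $(X,T,\rho)$ holds for \emph{every} $\varepsilon_1>0$, this dependence causes no circularity. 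With these clarifications the proof goes through.
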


Now we need to choose a proper metric on  $\mathbb{T}\times\Gamma\backslash G$.
The following facts can be found in Sections 2 and 5 in Green-Tao \cite{GT}, which we will
directly state without proof.

The lower central series filtration $G_\bullet$ on $G$ is the sequence of closed connected subgroups
$$
G=G_1\supseteq G_2\supseteq G_3=\{\textup{id}_G\},
$$
where
\begin{center}
$G_2=[G_1,G]=\begin{pmatrix}\begin{smallmatrix} 1&0&\mathbb{R}\\0&1&0\\0&0&1
\end{smallmatrix}\end{pmatrix}$
\end{center}
and id$_G$ is the identity element of $G$. Let $\mathfrak{g}$ be the Lie algebra of $G$. Let
$$
X_1=\begin{pmatrix}\begin{smallmatrix} 0&0&0\\0&0&1\\0&0&0\end{smallmatrix}\end{pmatrix}, \qquad
X_2=\begin{pmatrix}\begin{smallmatrix} 0&1&0\\0&0&0\\0&0&0\end{smallmatrix}\end{pmatrix},\qquad
X_3=\begin{pmatrix}\begin{smallmatrix} 0&0&1\\0&0&0\\0&0&0\end{smallmatrix}\end{pmatrix}.
$$
Then  $\mathcal{X}=\{X_1,X_2,X_3\}$ is a Mal'cev basis adapted to $G_\bullet$.
The corresponding Mal'cev coordinate map $\kappa:G\rightarrow \mathbb{R}^3$ is given by
\begin{equation}\label{HLW6.1}
\kappa
\left(
\footnotesize{
\begin{pmatrix}\begin{smallmatrix} 1&y&z\\0&1&x\\0&0&1\end{smallmatrix}\end{pmatrix}
}\right)
=(x,y,z-xy)
\end{equation}
and the metric $d_G$ on $G$ is defined by
\begin{center}
$d_G(g_1,g_2):=\inf\Big\{\sum\limits_{i=0}^{n-1}\min(|\kappa(h^{-1}_{i-1}h_i)|,
|\kappa(h^{-1}_{i}h_{i-1})|)\ :\ h_0,\dots,h_n\in G, h_0=g_1,h_n=g_2\Big\}$
\end{center}
from which we see that $d_G$ is left-invariant. By \eqref{HLW6.1}, we can get
\begin{equation}\label{HLW6.2}
\left|
\kappa
\left(\footnotesize{\begin{pmatrix}\begin{smallmatrix} 1&y&z\\0&1&x\\0&0&1\end{smallmatrix}\end{pmatrix}
}\right)\right|
\leq |x|+|y|+|z|
\end{equation}
with $x,y\in [0,1)$.
The above metric on $G$ descends to a metric on $\Gamma\backslash G$ given by
\begin{center}
$d_{\Gamma\backslash G}(\Gamma g_1,\Gamma g_2):=\inf\big\{d_G(g_1',g_2')\ : \ g_1',g_2'\in G, \Gamma g_1=\Gamma g_1',\Gamma g_2=\Gamma g_2'\big\}$.
\end{center}
It can be proved that $d_{\Gamma\backslash G}$ is indeed a metric on  $\Gamma\backslash G$.
Since $d_G$ is left-invariant,
\begin{equation}\label{HLW6.3}
d_{\Gamma\backslash G}(\Gamma g_1,\Gamma g_2)=\inf_{\gamma\in\Gamma}d_G(g_1,\gamma g_2).
\end{equation}
Finally,  take $d_\mathbb{T}$ to be the canonical Euclidean metric on $\mathbb{T}$, and $d = d_{\mathbb{T}\times \Gamma\backslash G}$ the
$l^{\infty}$-product metric of $d_{\mathbb{T}}$ and $d_{ \Gamma\backslash G}$  given by
\begin{equation}\label{HLW(6.4)}
d\big((t_1, \Gamma g_1), (t_2, \Gamma g_2)\big) = \max\big(d_{\mathbb{T}}(t_1, t_2),  d_{\Gamma \backslash G}(\Gamma g_1, \Gamma g_2)\big).
\end{equation}
In view of Lemma \ref{HLW5.2}, the choice of compatible metrics does not affect the measure complexity. Thus the above choice of $d$ is admissible.

\section{Proof of Theorem \ref{thm1}}%{Rational approximations of $\alpha$ and further analysis. }
In this section, we assume that $\alpha$  is irrational and prove Theorem \ref{thm1} for irrational $\alpha$.
Then combine with Theorem \ref{thmrational} we get
 Theorem \ref{thm1}.

Let
\[
\alpha=[0; a_1, a_2, \dots , a_i, \dots ]=\frac {1}{a_1+\frac {1}{a_2+\frac 1{a_3+\cdots }}}
\]
be the continued fraction expansion of $\alpha$.
Then the expansion is infinite since $\alpha$ is irrational.
Let $l_i/q_i=[0; a_1, a_2, \dots , a_i]$ be the $i$-th convergent of $\alpha$.
Let $\mathcal{Q}=\{q_i:i\geq1\}. $ For $B>2, $ define
\[
\mathcal{Q}^{\flat}(B)=\{q_i\in\mathcal{Q}:q_{i+1}\leq q_i^B\}\cup\{1\}
,\qquad
\mathcal{Q}^{\sharp}(B)=\{q_i\in\mathcal{Q}:q_{i+1}>q_i^B>1\}.
\]
Furthermore, we define
\[
M_1(B)=\bigcup_{q_i\in\mathcal{Q}^{\sharp}(B)}\{m\in\mathbb{Z}:q_i\leq|m|<q_{i+1}, q_i| m\}\cup\{0\}
\]
and define $M_2(B)=\mathbb{Z}\backslash M_1(B)$. Now expand $\phi$ into Fourier series
\[
\phi(t)=\sum_{m\in\mathbb{Z}}\hat{\phi}(m)e(mt),
\]
and further decompose $\phi$ as
\begin{equation*}
  \begin{split}
  \phi(t) &=\phi_1(t)+\phi_2(t)  \\
    &:=\sum_{m\in M_1(B)}\hat{\phi}(m)e(mt)+\sum_{m\in M_2(B)}\hat{\phi}(m)e(mt).
\end{split}
\end{equation*}
We call $\phi_1$ and $\phi_2$ the resonant and non-resonant part of $\phi, $ respectively. Let $\eta(t)=\phi^2(t). $ We can do the same decompositions for $\eta$ and $\psi, $ getting
\begin{equation}\label{8}
\eta(t)=\eta_1(t)+\eta_2(t), \qquad \psi(t)=\psi_1(t)+\psi_2(t).
\end{equation}
Note that the above decompositions of $\phi$, $\eta$ and $\psi$ depend on the parameter $B$, though we do not
make it explicit.

In our proof of Theorem \ref{thm1} for irrational $\alpha$ we still need a lemma from \cite{HLW}.

\begin{lemma}(\cite[Lemma 4.2]{HLW})\label{HLW4.2}
Let $B > 2$ and let $\{a(m)\}_{m\in \mathbb{Z}}$ be a sequence such that $|a(m)|\ll m^{-2B}$.
Then the series
$$
\sum_{m\in M_2(B)}\frac{a(m)}{e(m\alpha)-1}
$$
is absolutely convergent.
\end{lemma}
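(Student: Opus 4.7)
The plan is to reduce absolute convergence to a uniform Diophantine lower bound on $\|m\alpha\|$ for $m\in M_2(B)$. Since $|e(m\alpha)-1|\ge 4\|m\alpha\|$, the task becomes to show
\[
\sum_{m\in M_2(B)\setminus\{0\}}\frac{|a(m)|}{\|m\alpha\|}<\infty.
\]
I claim the uniform lower bound
\[
\|m\alpha\|\gg \frac{1}{|m|^{B}}
\]
for every $m\in M_2(B)$ with $|m|\ge q_1$; the finitely many $m$ with $0<|m|<q_1$ contribute a bounded amount since $\alpha$ is irrational. Given the claim, $|a(m)|/\|m\alpha\|\ll |m|^{-B}$, and since $B>2$ the series $\sum_m |m|^{-B}$ converges.

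For such $m$, fix $i\ge 1$ with $q_i\le |m|<q_{i+1}$. If $q_i\in\mathcal{Q}^\flat(B)$, i.e.\ $q_{i+1}\le q_i^B$, the classical best-approximation bound $\|m\alpha\|\ge \|q_i\alpha\|\ge 1/(q_i+q_{i+1})\ge 1/(2q_{i+1})$ combines with $q_{i+1}\le q_i^B\le |m|^B$ to give $\|m\alpha\|\ge 1/(2|m|^B)$ directly.

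The harder case is $q_i\in\mathcal{Q}^\sharp(B)$, where $q_{i+1}$ can vastly exceed $|m|^B$. Here membership in $M_2(B)$ forces $q_i\nmid m$, and I apply Legendre's theorem: if $\|m\alpha\|<1/(2|m|)$ were to hold, take $p$ the nearest integer to $m\alpha$, set $d=\gcd(|m|,p)$, and write $|m|=dm'$, $p=dp'$ in lowest terms. Then $|\alpha-p'/m'|<1/(2(m')^2)$, so Legendre's theorem yields that $p'/m'$ is a convergent, i.e.\ $m'=q_j$ for some $j\ge 1$. Since $\|m\alpha\|=d\|q_j\alpha\|\ge d/(2q_{j+1})$ while $\|m\alpha\|<1/(2dq_j)$, one obtains $d^2<q_{j+1}/q_j$. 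Using $|m|=dq_j\in[q_i,q_{i+1})$: the case $j\ge i+1$ is impossible because then $dq_j\ge q_j\ge q_{i+1}$; for $j\le i-1$ we have $d\ge q_i/q_j$, so $q_i^2<q_jq_{j+1}\le q_{i-1}q_i$ by monotonicity of $q_jq_{j+1}$, forcing $q_i<q_{i-1}$, absurd. Hence $j=i$, meaning $q_i\mid|m|$, contradicting $m\in M_2(B)$. Therefore $\|m\alpha\|\ge 1/(2|m|)\ge 1/(2|m|^B)$ since $B\ge 1$.

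The main obstacle is executing the Legendre-based dichotomy in the $\mathcal{Q}^\sharp(B)$ case: one must extract from a hypothetical small $\|m\alpha\|$ that $|m|$ is divisible by some convergent denominator $q_j$, and then use the range constraint $|m|\in[q_i,q_{i+1})$ together with the size inequality $d^2<q_{j+1}/q_j$ to pin down $j=i$, thereby contradicting the defining property of $M_2(B)$. The rest is bookkeeping.
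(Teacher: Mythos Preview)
The paper does not prove this lemma; it is quoted verbatim from \cite[Lemma~4.2]{HLW} and used as a black box. So there is no ``paper's own proof'' to compare against here.

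Your argument is essentially correct. The reduction to the Diophantine lower bound $\|m\alpha\|\gg|m|^{-B}$ for $m\in M_2(B)$ is the right idea, and your two-case analysis (according to whether the relevant $q_i$ lies in $\mathcal{Q}^\flat(B)$ or $\mathcal{Q}^\sharp(B)$) is exactly how this bound is obtained. The $\mathcal{Q}^\flat$ case is immediate from best approximation, and in the $\mathcal{Q}^\sharp$ case your use of Legendre's theorem to force $q_i\mid m$ (contradicting $m\in M_2(B)$) is the standard and correct mechanism.

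Two minor points worth tidying. First, Legendre's theorem yields $m'=q_j$ for some $j\ge 0$, not $j\ge 1$; fortunately your trichotomy $j\ge i+1$, $j=i$, $j\le i-1$ already covers $j=0$ (it lands in the last case since $i\ge 1$), so this is only a typo. Second, the inclusion $1\in\mathcal{Q}^\flat(B)$ is by fiat and need not satisfy $q_{i+1}\le q_i^B$; if $q_1=1$ your Case~1 bound does not literally apply to $|m|=1$. This is harmless---absorb any finite initial segment (say $|m|<q_2$) into the ``finitely many $m$'' you already set aside---but it is worth stating explicitly.
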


Define
\begin{equation}\label{gvarphi}
 g_\phi (t)=\sum_{m\in M_2(B)}\hat{\phi}(m)\frac {e(mt)}{e(m\alpha)-1}.
\end{equation}
Since $\phi$ is assumed to be $C^{\infty}$-smooth, we have $\hat{\phi}(m) \ll |m|^{-2B}$  for any $B > 0$.
Therefore, by Lemma \ref{HLW4.2},  $g_\phi$ is a continuous periodic function with period $1$ and satisfies
\begin{equation}\label{varphi_2}
g_\phi (t+\alpha)-g_\phi (t)=\phi_2(t).
\end{equation}
Similarly, there exist continuous periodic functions $g_\eta$ and $g_\psi$ with period $1$ such that
\begin{equation}\label{eta_2psi_2}
  \eta_2(t)=g_\eta (t+\alpha)-g_\eta (t), \qquad
\psi_2(t)=g_\psi (t+\alpha)-g_\psi (t).
\end{equation}

To investigate the resonant part, define
\begin{equation}\label{9}
\Phi_n(t)=\sum_{l=0}^{n-1}\phi_1(l\alpha+t), \quad  H_n(t)=\sum_{l=0}^{n-1}\eta_1(l\alpha+t), \quad \Psi_n(t)=\sum_{l=0}^{n-1}\psi_1(l\alpha+t)
\end{equation}
for $n\in\mathbb{N}$ and $t\in\mathbb{T}$. For $n=q_i$, Huang-Liu-Wang \cite{HLW} proved the following proposition.

\begin{lemma}(\cite[Lemma 4.3]{HLW})\label{HLW4.3}
Let $B> 2$. Then there exists a positive constant $C_1 = C_1(B)$ depending on $B$ only, such that the three inequalities
$$
|\Phi_{q_i}(t)-q_i\hat{\phi}(0)|\leq C_1q_i^{-B+1};  \ \
|H_{q_i}(t)-q_i\hat{\eta}(0)|\leq C_1q_i^{-B+1};  \ \
|\Psi_{q_i}(t)-q_i\hat{\psi}(0)|\leq C_1q_i^{-B+1}
$$
hold simultaneously for all $t\in \mathbb{T}$ and all $q_i\in \mathbb{Q}^\sharp(B)$.
\end{lemma}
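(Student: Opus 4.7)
The plan is to expand $\phi_1$ in Fourier series, convert $\Phi_{q_i}(t) - q_i\hat{\phi}(0)$ into a sum of oscillatory sums $D_{q_i}(m\alpha) := \sum_{l=0}^{q_i-1} e(ml\alpha)$, and then exploit the arithmetic structure of $M_1(B)$ together with the hypothesis $q_i \in \mathcal{Q}^\sharp(B)$. We treat only $\Phi_{q_i}$ in detail; the estimates for $H_{q_i}$ and $\Psi_{q_i}$ will follow by the same argument applied to the $C^\infty$-smooth functions $\eta = \phi^2$ and $\psi$, whose resonant parts share the same Fourier support $M_1(B)$.

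Every nonzero $m \in M_1(B)$ has a unique level decomposition $m = q_j s$ with $q_j \in \mathcal{Q}^\sharp(B)$ and $1 \leq |s| < q_{j+1}/q_j$. Splitting the identity
$$\Phi_{q_i}(t) - q_i\hat{\phi}(0) = \sum_{\substack{m \in M_1(B) \\ m \neq 0}} \hat{\phi}(m)\,e(mt)\,D_{q_i}(m\alpha)$$
according as $j > i$, $j = i$, or $j < i$, and using that $|\hat{\phi}(m)| \ll_r |m|^{-r}$ for every $r$, the two cases $j \geq i$ will be handled by the trivial bound $|D_{q_i}(m\alpha)| \leq q_i$. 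Their total contribution is $\ll q_i \sum_{|m|\geq q_i}|m|^{-r} \ll q_i^{2-r}$, which is $\leq q_i^{1-B}$ once $r \geq B+1$.

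The main obstacle is the case $j < i$. Here $q_{j+1} \leq q_i$ and the standard bound $|D_{q_i}(m\alpha)| \leq 1/(2\|m\alpha\|) \ll q_{j+1}/|s|$ only yields an $O(q_i)$ contribution after summation, too weak by a factor $q_i^B$. The observation that breaks this barrier is the congruence
$$q_i\beta_j \equiv q_j\theta_i \pmod{1}, \qquad \beta_j := q_j\alpha - p_j, \quad \theta_i := q_i\alpha - p_i,$$
which follows from $q_i\beta_j - q_j\theta_i = q_j p_i - q_i p_j \in \mathbb{Z}$. Writing $D_{q_i}(m\alpha) = (e(sq_j\theta_i) - 1)/(e(s\beta_j) - 1)$ and using $|\theta_i| < 1/q_{i+1}$ together with $|\beta_j| \geq 1/(2q_{j+1})$, one obtains $|D_{q_i}(m\alpha)| \ll q_j q_{j+1}/q_{i+1}$; the hypothesis $q_{i+1} > q_i^B$ and $q_{j+1} \leq q_i$ then sharpen this to $|D_{q_i}(m\alpha)| \ll q_j q_i^{1-B}$.

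Summing against $|\hat{\phi}(q_js)| \ll_r (q_j|s|)^{-r}$ gives the contribution from level $j$ as $\ll q_j^{1-r} q_i^{1-B}$. Summing further over $q_j \in \mathcal{Q}^\sharp(B)$ with $j < i$ converges for $r \geq 3$, by the exponential growth of the convergent denominators, to a total of $\ll q_i^{1-B}$. Assembling the three cases completes the proof, the single crucial step being the congruence $q_i\beta_j \equiv q_j\theta_i \pmod{1}$ that produces the necessary cancellation.
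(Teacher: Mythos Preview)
The paper does not prove this lemma; it is quoted from \cite[Lemma~4.3]{HLW} and used as a black box. There is therefore no proof in the present paper to compare against.

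Your argument is correct and self-contained. The Fourier expansion reduces the estimate to controlling the geometric sums $D_{q_i}(m\alpha)$ for $m\in M_1(B)\setminus\{0\}$, and the decisive step is exactly the one you isolate: the integrality $q_i\beta_j-q_j\theta_i=q_jp_i-q_ip_j\in\mathbb{Z}$ converts the numerator $e(q_im\alpha)-1$ into $e(sq_j\theta_i)-1$, bringing in the crucial factor $|\theta_i|<1/q_{i+1}<q_i^{-B}$. One point you leave implicit deserves a sentence: the lower bound $|e(s\beta_j)-1|\gg|s\beta_j|$ requires $|s\beta_j|\le 1/2$, and this holds because $|s\beta_j|<|s|/q_{j+1}<1/q_j\le 1/2$, the last inequality following from $q_j^B>1$ in the definition of $\mathcal{Q}^\sharp(B)$, which forces $q_j\ge 2$. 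With that check in place the three cases assemble as you claim. Note also that the constant $C_1$ you produce depends on the $C^\infty$-seminorms of $\phi$, $\eta$, $\psi$ as well as on $B$; the phrase ``depending on $B$ only'' in the lemma should be read as ``uniform in $t$ and in $q_i\in\mathcal{Q}^\sharp(B)$''.
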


Now we are ready to prove Theorem \ref{thm1} for irrational $\alpha$ and the skew product $T$ defined in \eqref{T}. We will prove the following proposition.

\begin{proposition}\label{prop:4}
Let $(\mathbb{T}\times\Gamma\backslash G, T)$ be as in Theorem \ref{thm1} with $\alpha$ irrational.
Then the measure complexity of $(\mathbb{T}\times\Gamma\backslash G, T)$ is sub-polynomial for any $\rho\in M(\mathbb{T}\times\Gamma\backslash G, T)$.
\end{proposition}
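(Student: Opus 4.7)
The plan is to adapt the measure-complexity scheme of Huang--Liu--Wang \cite{HLW} to the skew product $T$ of (\ref{T}). The only structural difference between $T$ and the map $S_0$ treated in \cite{HLW} is the constant $k$ in the $(1,2)$-entry of the Heisenberg matrix factor, so the argument of \cite{HLW} should go through with only routine bookkeeping for $k$.

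The starting point is an explicit formula for $T^n$. A direct induction in the spirit of (\ref{3'})--(\ref{4'}) gives
$$
T^n(t_0,\Gamma g_0)=\Big(t_0+n\alpha,\ \Gamma g_0\,\footnotesize{\begin{pmatrix}1 & kX_n(t_0) & W_n(t_0)\\ 0 & 1 & X_n(t_0)\\ 0 & 0 & 1\end{pmatrix}}\Big),
$$
where $X_n(t_0)=\sum_{l=0}^{n-1}\phi(t_0+l\alpha)$ and $W_n(t_0)=F_n(t_0)+\tfrac{k}{2}(X_n(t_0)^2-E_n(t_0))$, with $E_n,F_n$ the full Birkhoff sums of $\eta=\phi^2$ and $\psi$ respectively. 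Decomposing $\phi,\eta,\psi$ via (\ref{8}) and telescoping the non-resonant parts through (\ref{varphi_2})--(\ref{eta_2psi_2}), each full Birkhoff sum equals the corresponding resonant sum (\ref{9}) plus a coboundary $g_\bullet(t+n\alpha)-g_\bullet(t)$, and the Lipschitz regularity of $g_\phi,g_\eta,g_\psi$ together with $\|q_i\alpha\|\le q_i^{-B}$ bounds this coboundary by $O(q_i^{-B})$ at times $n=q_i\in\mathcal{Q}^\sharp(B)$. Combined with Lemma \ref{HLW4.3} and the hypothesis $\hat\phi(0)=0$, this yields, uniformly in $t_0\in\mathbb{T}$,
$$
|X_{q_i}(t_0)|\ll q_i^{-B+1},\quad |E_{q_i}(t_0)-q_i\hat\eta(0)|\ll q_i^{-B+1},\quad |F_{q_i}(t_0)-q_i\hat\psi(0)|\ll q_i^{-B+1}.
$$

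Next I would turn these orbit bounds into a Bowen-ball count. Cover $\mathbb{T}\times\Gamma\backslash G$ by $\ll\varepsilon^{-4}$ cells of $d$-diameter $<\varepsilon$ in the metric (\ref{HLW(6.4)}). In the Mal'cev coordinates (\ref{HLW6.1}) the Heisenberg-fibre discrepancy between two orbits $(t_0,\Gamma g_0),(t_0',\Gamma g_0')$ in a common cell at time $l$ becomes an explicit affine function of the initial displacement and of the partial Birkhoff sum differences $X_l(t_0)-X_l(t_0'),E_l(t_0)-E_l(t_0'),F_l(t_0)-F_l(t_0')$. Applying (\ref{HLW6.2}), using the Denjoy--Koksma inequality to bound these differences at $l=q_i$, and using the Rokhlin-tower structure of the base rotation $t\mapsto t+\alpha$ to average the intermediate-time discrepancies over $0\le l<q_i$, one obtains $\bar d_{q_i}((t_0,\Gamma g_0),(t_0',\Gamma g_0'))=O(\varepsilon)$ for all sufficiently large $q_i\in\mathcal{Q}^\sharp(B)$, where $\bar d_{q_i}$ is the averaged metric (\ref{bard}). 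Hence $s_{q_i}(\mathbb{T}\times\Gamma\backslash G,T,d,\rho,C\varepsilon)\ll\varepsilon^{-4}$ uniformly in $\rho\in M(\mathbb{T}\times\Gamma\backslash G,T)$, which delivers sub-polynomial complexity along $\mathcal{Q}^\sharp(B)$ (the bounded-partial-quotient case, in which $\mathcal{Q}^\sharp(B)$ stabilizes at $\{1\}$, is immediate since then $\phi_1\equiv 0$ and all Birkhoff sums stay within $O(1)$ of their linear means).

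The main technical difficulty is the averaged-distance bound just stated. Because $d_G$ is only left-invariant while $T$ acts on the fibre $\Gamma\backslash G$ by \emph{right} multiplication, one cannot simply translate the initial difference $g_0^{-1}g_0'$ past the iterated matrix product, so the group-valued estimate has to be linearized in Mal'cev coordinates; this reduces it to three scalar estimates for partial Birkhoff sums, which are controlled through the resonant/non-resonant decomposition. Controlling $\bar d_{q_i}$ at all intermediate times $l<q_i$ (not only at $l=q_i$) is the real technical core of \cite{HLW}, and the extra quadratic term $\tfrac{k}{2}X_n^2$ in $W_n$ merely requires one additional Cauchy--Schwarz-type bound without altering the overall structure.
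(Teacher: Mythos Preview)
Your outline has a genuine gap in the Bowen-ball count, and it is precisely the step you flag as ``the real technical core'' that does not go through as stated.

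You propose to cover $\mathbb{T}\times\Gamma\backslash G$ by $O(\varepsilon^{-4})$ cells of fixed diameter $\varepsilon$ and to show $\bar d_{q_i}\big((t,\Gamma g),(t',\Gamma g')\big)=O(\varepsilon)$ for points in the same cell. But in the Mal'cev estimate \eqref{HLW6.2}--\eqref{HLW6.3} the conjugation term $Y(l)^{-1}g^{-1}g'Y(l)$ has $z$-entry of the form
\[
(z'-z)+y(x-x')+X_l(t)\big(k(x-x')-(y-y')\big),
\]
so the fibre discrepancy at time $l$ is of order $|X_l(t)|\cdot\varepsilon$. For $l$ ranging up to $q_i$ one only has $|X_l(t)|=O(q_i)$ (the resonant part $\Phi_l$ is bounded termwise by $\|\phi_1\|_\infty\cdot l$, and the coboundary is $O(1)$), and neither Denjoy--Koksma nor averaging over $l$ removes this linear growth: the \emph{average} of $|X_l(t)|$ over $0\le l<q_i$ is still of order $q_i$. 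Hence $\bar d_{q_i}=O(q_i\varepsilon)$, not $O(\varepsilon)$, and a $q_i$-independent cover cannot give $s_{q_i}=O(\varepsilon^{-4})$. The same amplification hits the second term $Y(l;t)^{-1}Y(l;t')$, whose entries involve $\Phi_l(t)-\Phi_l(t')$, of size $lL\|t-t'\|\le q_iL\varepsilon$.

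The paper (following \cite{HLW}) fixes this by a different balancing. First it conjugates $T$ by the explicit homeomorphism $R$ in \eqref{10} so that only the resonant parts $\phi_1,\eta_1,\psi_1$ remain; this is cleaner than carrying the coboundaries, and in particular avoids your unproved claim that $g_\phi,g_\eta,g_\psi$ are Lipschitz. Then it takes cells of \emph{$q_i$-dependent} size---radius $\varepsilon/(Lq_i)$ in $\mathbb{T}$ and $(q_i^2L)^{-1}$ in the fibre---so that the amplification factor $|\Phi_m(t)|=O(q_i)$ is absorbed, giving a \emph{uniform} bound $d\big(T_1^m(t,\Gamma g),T_1^m(t^*,\Gamma g^*)\big)=O_k(\varepsilon)$ for \emph{every} $m\le n_i$, not just on average. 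The price is a covering number $\asymp q_i^7$, but this is paid off by pushing the time horizon to $n_i=q_i^{B-1}$ rather than $q_i$: since $B=8\tau^{-1}+1$, one gets $s_{n_i}/n_i^\tau\ll q_i^7/q_i^{8}\to 0$. The possibility of taking $n_i$ this large hinges on Lemma~\ref{HLW4.3}, which gives $|\Phi_{q_i}|\le C_1 q_i^{-B+1}$ and hence $|\Phi_m|\le C_1 q_i^{-1}+C_2 q_i$ for all $m\le q_i^{B-1}$ via the decomposition $m=a_mq_i+b_m$. This interplay between the cell size, the time horizon, and the parameter $B$ is the missing mechanism in your sketch.

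A minor point: in the case where $\mathcal{Q}^\sharp(B)$ is finite, $\phi_1$ need not vanish identically (only $M_1(B)$ is finite, not $\{0\}$); the paper handles this by solving the cohomological equation over all of $\mathbb{Z}\setminus\{0\}$ and conjugating $T$ to an isometry.
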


\begin{proof}
Fix $\tau>0. $ We want to show that, for any $\varepsilon>0, $
\[
\liminf_{n\rightarrow\infty}\frac {s_n(\mathbb{T}\times\Gamma\backslash G, T, d, \rho, \varepsilon)}{n^\tau}=0.
\]
Without loss of generality, we assume that both $\tau$, $\varepsilon< 10^{-2}$,
and $\tau^{-1}$, $\varepsilon^{-1}$ are integers.
Set $B=8\tau^{-1}+1$.

Firstly, assume that $\mathcal{Q}^\sharp(B)$ is infinite.
Construct a transformation $R:\mathbb{T}\times\Gamma\backslash G\rightarrow\mathbb{T}\times\Gamma\backslash G$ as
\begin{equation}\label{10}
R:(t, \Gamma g)\mapsto
\left(
t, \Gamma g
\footnotesize{
\begin {pmatrix}
1 & k g_{\phi}(t) & \frac 12 k g_{\phi}^2(t)-\frac 12 k g_{\eta}(t)+g_{\psi}(t)\\
0 & 1 & g_{\phi}(t)\\
0 & 0 & 1
\end{pmatrix}
}
\right).
\end{equation}
Write $T_1=R^{-1}\circ T \circ R. $ Then a straightforward calculation gives
\begin{equation*}
  \begin{split}
     T_1(t, \Gamma g) &=R^{-1}\circ T \circ R(t, \Gamma g) \\
       &=R^{-1}\circ T
\left(
t, \Gamma g
\footnotesize{
\begin{pmatrix}
1 & k g_{\phi}(t) & \frac 12 k g_{\phi}^2(t)-\frac 12 k g_{\eta}(t)+g_{\psi}(t)\\
0 & 1 & g_{\phi}(t)\\
0 & 0 & 1
\end{pmatrix}
}
\right)  \\
&=R^{-1}\hskip-1mm
\left(\hskip-1mm
t+\alpha, \Gamma g
\footnotesize{
\begin{pmatrix}
1 & k\phi(t)+k g_{\phi}(t) & \psi(t)+k g_{\phi}(t)\phi(t) +  \frac 12 k g_{\phi}^2(t)-\frac 12 k g_{\eta}(t)+g_{\psi}(t)\\
0 & 1 & \phi(t)+g_{\phi}(t)\\
0 & 0 & 1
\end{pmatrix}
}\hskip-1mm
\right)
\\
&=\left(
t+\alpha, \Gamma g
\footnotesize{
\begin {pmatrix}
1 & k\phi(t)+k g_{\phi}(t)-k g_{\phi}(t+\alpha) & \omega\\
0 & 1 & \phi(t)+g_{\phi}(t)-g_{\phi}(t+\alpha)\\
0 & 0 & 1
\end{pmatrix}
}
\right),
\end{split}
\end{equation*}
where
\begin{equation*}
\begin{split}
\omega &=\frac 12 k g_{\phi}^2(t+\alpha)+\frac 12 k g_{\phi}^2(t)-k g_\phi(t)g_\phi(t+\alpha)+\frac 12 k g_\eta(t+\alpha) \\
    &\hskip8mm -g_\psi(t+\alpha)-k\phi(t)g_\phi(t+\alpha)+\psi(t)+k\phi(t)g_\phi(t)-\frac 12 k g_\eta(t)+g_\psi(t).
\end{split}
\end{equation*}
So $ \omega$ can be simplified as
  \begin{equation}\label{11}
  \begin{split}
     \omega &=\frac 12 k (g_\phi(t+\alpha)-g_\phi(t))^2+\frac 12 k (g_\eta(t+\alpha)-g_\eta(t))\\
     &\hskip46mm -k\phi(t)(g_\phi(t+\alpha)-g_\phi(t))+\psi_1(t) \\
       &=\frac 12 k\phi_2^2(t)+\frac 12 k\eta_2(t)-k\phi(t)\phi_2(t)+\psi_1(t)\\
       &=\frac 12 k(\phi(t)-\phi_2(t))^2-\frac 12 k \phi^2(t)+\frac 12 k\eta_2(t)+\psi_1(t)\\
     &=\frac 12 k\phi_1^2(t)-\frac 12 k\eta_1(t)+\psi_1(t),
   \end{split}
  \end{equation}
where we have used \eqref{gvarphi}, \eqref{eta_2psi_2} and \eqref{varphi_2}.
It follows that
\begin{equation}\label{12}
T_1:(t, \Gamma g)\mapsto
\left(
t+\alpha, \Gamma g
\footnotesize{
\begin{pmatrix}
1 & k\phi_1(t) & \frac 12 k\phi_1^2(t)-\frac 12 k\eta_1(t)+\psi_1(t)\\
0 & 1 & \phi_1(t)\\
0 & 0 & 1
\end{pmatrix}
}
\right)
\end{equation}
and then
\begin{equation*}
  T_1^n:(t, \Gamma g)\mapsto
\left(
t+n\alpha, \Gamma g
\footnotesize{
\begin {pmatrix}
1 & k\Phi_n(t) & \frac 12 k\Phi_n^2(t)-\frac 12 k H_n(t)+\Psi_n(t)\\
0 & 1 & \Phi_n(t)\\
0 & 0 & 1
\end{pmatrix}
}
\right).
\end{equation*}
Clearly, $R$ is a homeomorphism on $\mathbb{T}\times\Gamma\backslash G$. Hence by Lemma \ref{HWYprop2.2}, we need only to show that the measure complexity of $(\mathbb{T}\times\Gamma\backslash G, T_1, v)$ is weak than $n^\tau$, where $v=\rho\circ R$.

Let $C_1 = C_1(B) > 0$ be the constant in Lemma \ref{HLW4.3}. The functions $\phi_1(t), $ $\eta_1(t)$ and $\psi_1(t)$ are Lipschitz continuous, and therefore there exists $L>0$ such that
\begin{equation}\label{13}
\begin{split}
&|\phi_1(t_1)-\phi_1(t_2)|\leq L\|  t_1-t_2\|,
\\
&|\eta_1(t_1)-\eta_1(t_2)|\leq L\|  t_1-t_2\|,
\\
&|\psi_1(t_1)-\psi_1(t_2)|\leq L\|  t_1-t_2\|
\end{split}
\end{equation}
for any $t_1, t_2\in\mathbb{T}. $ We also assume that $L$ is large enough such that $L>\varepsilon^{-1}. $ Moreover, since $\phi_1(t), $ $\eta_1(t)$ and $\psi_1(t)$ are continuous, there exists a constant $C_2>0$ such that
\[
|\phi_1(t)|\leq C_2, \qquad|\eta_1(t)|\leq C_2, \qquad|\psi_1(t)|\leq C_2
\]
for all $t\in\mathbb{T}. $ Since $q_i\rightarrow\infty$ as $i\rightarrow\infty$,
there exists a constant $I_0>0$ such that $(C_1+C_2)/q_i<\varepsilon$ for all $i\geq I_0$.
For $i\geq I_0$, define
\[
F_1(i)=\Big\{t=\frac {j\varepsilon}{Lq_i}\in\mathbb{T}\ :\ j=0, 1, \dots , \frac {Lq_i}{\varepsilon}-1 \Big\}
\]
and
\[
F_2(i)=\left\{\Gamma g=
\footnotesize{
\begin {pmatrix}
1 & j_2(q_i^2 L)^{-1} & j_3(q_i^2 L)^{-1}\\
0 & 1 & j_1(q_i^2 L)^{-1}\\
0 & 0 & 1
\end{pmatrix}
}
\in\Gamma\backslash G\ :\ j_1, j_2, j_3 =0, 1, \dots  , q_i^2 L-1\right\}.
\]
Let
\[
F(i)=\{(t, \Gamma g)\in\mathbb{T}\times\Gamma\backslash G\ :\ t\in F_1(i), \;\Gamma g\in F_2(i)\}.
\]
Then $\sharp F(i)=\varepsilon^{-1}L^4 q_i^7. $

Now assume that $q_i\in\mathcal{Q}^\sharp(B)$ with $i\geq I_0 $ and set
\begin{equation}\label{14}
  n_i=q_i^{B-1}.
\end{equation}
Then any positive integer $m\leq n_i$ can be uniquely written as
\begin{equation}\label{15}
  m=a_m q_i+b_m
\end{equation}
with $0\leq b_m<q_i$ and $a_m\leq q_i^{B-2}. $ By the definition of $F(i), $ for any
\[
(t, \Gamma g)=
\left(
t, \Gamma
\footnotesize{
\begin {pmatrix}
1 & y & z\\
0 & 1 & x\\
0 & 0 & 1
\end {pmatrix}
}
\right)
\in\mathbb{T}\times\Gamma\backslash G
\]
with $x, y, z\in[0, 1), $ there exists
\[
(t^*, \Gamma g^*)=
\left(
t^*, \Gamma
\footnotesize{
\begin {pmatrix}
1 & y^* & z^*\\
0 & 1 & x^*\\
0 & 0 & 1\end {pmatrix}
}
\right)
\in F(i)
\]
\\such that $\|t-t^*\|\leq\varepsilon/(Lq_i)$ and
\begin{equation}\label{16}
  \max\{|x-x^*|, \ |y-y^*|,\  |z-z^*|\}\leq\frac 1{q_i^2 L}.
\end{equation}
We want to show that $d(T_1^m(t, \Gamma g), T_1^m(t^*, \Gamma g^*))$ is small for any $m\leq n_i$,
 where $n_i$ is as in $\eqref{14}$.
 Let
\[
Y(m)=
\footnotesize{
\begin{pmatrix}
       1 & k\Phi_m(t) & \frac 12 k \Phi_m^2(t)-\frac 12 k H_m(t)+\Psi_m(t)\\
       0 & 1 & \Phi_m(t) \\
       0 & 0 & 1
     \end{pmatrix}
}
\]
and

\[
Y^*(m)=
\footnotesize{
\begin{pmatrix}
       1 & k\Phi_m(t^*) & \frac 12 k \Phi_m^2(t^*)-\frac 12 k H_m(t^*)+\Psi_m(t^*)\\
       0 & 1 & \Phi_m(t^*) \\
       0 & 0 & 1
     \end{pmatrix}
}.
\]
Then we have
\[
T_1^m(t, \Gamma g)=(t+\alpha m, \Gamma gY(m)), \quad T_1^m(t^*, \Gamma g^*)=(t^*+\alpha m, \Gamma g^*Y^*(m)).
\]
Therefore, by our choice of the metric on $\mathbb{T}\times\Gamma\backslash G$ in \eqref{HLW(6.4)}, we have
\begin{equation}\label{17}
  d\big(T_1^m(t, \Gamma g), T_1^m(t^*, \Gamma g^*)\big)
  \leq
  \max\big(\|t-t^*\|, d_{\Gamma\backslash G}(\Gamma gY(m), \Gamma g^*Y^*(m))\big).
\end{equation}
Since the term $\|t-t^*\|$ can be arbitrarily small as $q_i\rightarrow\infty $, it remains only to bound the last term in \eqref{17}.
 By the triangle inequality and \eqref{HLW6.3} we get
\begin{align} \label{18}
d_{\Gamma\backslash G}(\Gamma gY(m), \Gamma g^*Y^*(m)) &\leq d_{\Gamma\backslash G}(\Gamma g^*Y(m), \Gamma gY(m))+d_{\Gamma\backslash G}(\Gamma g^*Y^*(m), \Gamma g^*Y(m)) \nonumber \\
&\leq d_G(g^*Y(m), gY(m))+d_G(g^*Y^*(m), g^*Y(m))\\
           & =d_G(g^*Y(m), gY(m))+d_G(Y^*(m), Y(m))\nonumber ,
\end{align}
where the last equality follows from the left invariance of $d_G$.
Furthermore, from the definition of $d_G$ we get
\begin{equation}\label{19}
 \begin{split}
    d_G(g^*Y(m), gY(m)) & \leq|\kappa(Y(m)^{-1}g^{-1}g^*Y(m))|, \\
    d_G(Y^*(m), Y(m))   &\leq|\kappa(Y(m)^{-1}Y^*(m))|,
 \end{split}
\end{equation}
where $\kappa$ is the Mal'cev coordinate map \eqref{HLW6.1}, and $|\cdot |$ is the $l^\infty$-norm on $\mathbb{R}^3. $

A straightforward calculation gives
\begin{align*}
Y(m)^{-1}&g^{-1}g^* Y(m)   \\
&=
\footnotesize{
\begin{pmatrix}
       1 & -k\Phi_{m}(t) & \frac 12 k\Phi_m^2(t)+\frac 12 k H_m(t)-\Psi_m(t) \\
       0 & 1 & -\Phi_m(t) \\
       0 & 0 & 1
     \end{pmatrix}
     \begin{pmatrix}
       1 & y^*-y & z^*-z+xy-x^*y \\
       0 & 1 & x^*-x \\
       0 & 0 & 1
     \end{pmatrix}
}
\\
&\hskip56mm \times
\footnotesize{
\begin{pmatrix}
       1 & k\Phi_{m}(t) & \frac 12 k\Phi_m^2(t)-\frac 12 k H_m(t)+\Psi_m(t) \\
       0 & 1 & \Phi_m(t) \\
       0 & 0 & 1
     \end{pmatrix}
}
\\
&=
\footnotesize{
\begin{pmatrix}
1 & y^*-y & z^*-z+y(x-x^*)+\Phi_m(t)(kx-kx^*-y+y^*)\\
0 & 1 & x^*-x\\
0 & 0 & 1
\end{pmatrix}
}.
\end{align*}
Since $x, y\in[0, 1)$, from  \eqref{HLW6.2} we get
\begin{equation*}
| \kappa(Y(m)^{-1}g^{-1}g^* Y(m))|\leq
\big(\max(|k|, 1)|\Phi_m(t)|+2\big)(| x-x^*|+| y-y^*|+| z-z^*|).
%\begin{cases}
%(| k\Phi_m(t)|+2)(| x-x^*|+| y-y^*|+| z-z^*|), & \text{if\ } | k|>1;\\
%(| \Phi_m(t)|+2)(| x-x^*|+| y-y^*|+| z-z^*|), & \text{if\ } | k|\leq1.
%\end{cases}
\end{equation*}
By Lemma \ref{HLW4.3}, and noticing  that $\hat{\phi}(0)=0$ as in \eqref{1},  we get
\[
|\Phi_{q_i}|\leq C_1 q_i^{-B+1}.
\]
Hence by the definition of $\Phi_n(t)$ and $\eqref{15}$, we obtain
\begin{equation*}
  \begin{split}
     |\Phi_m(t)| &\leq\sum_{r=0}^{a_m-1}|\Phi_{q_i}(t+rq_i \alpha)|+\sum_{l=0}^{b_m}|\phi_1(t+(a_m q_i +l)\alpha)|
     \leq \frac {C_1 a_m}{q_i^{B-1}}+C_2 q_i \leq\frac {C_1}{q_i}+C_2q_i.
   \end{split}
\end{equation*}
Thus by $\eqref{16}, $ $\eqref{18}$ and $\eqref{19}$, %and noticing that ${C_1}\<{q_i}$,
we obtain
\begin{equation}\label{20}
  \begin{split}
     d_G(g^*Y(m), gY(m))&\leq
     \big(\max(|k|, 1)|\Phi_m(t)|+2\big)(| x-x^*|+| y-y^*|+| z-z^*|)
%\begin{cases}
%(| k\Phi_m(t)|+2)(| x-x^*|+| y-y^*|+| z-z^*|), & \text{if\ } | k|>1;\\
%(| \Phi_m(t)|+2)(| x-x^*|+| y-y^*|+| z-z^*|), & \text{if\ } | k|\leq1;
%\end{cases}
\\
      &\leq \big(\max(|k|, 1)(1+C_2q_i)+2\big)\frac{3}{q_i^2L}    \leq 6\big(\max(|k|, 1) + 1 \big)\varepsilon.
%\begin{cases}
%6(| k|+1)\varepsilon, & \text{if\ } | k|>1;\\
%12\epsilon, & \text{if\ } | k|\leq1;
%\end{cases}
  \end{split}
\end{equation}
The treatment of $d_G(Y^*(m), Y(m))$ is similar. We calculate that
\begin{align*}
Y^{-1}(m)  Y^*(m)
& =
\footnotesize{
\begin{pmatrix}
       1 & -k\Phi_{m}(t) & \frac 12 k\Phi_{m}^2(t)+\frac 12 k H_{m}(t)-\Psi_{m}(t) \\
       0 & 1 & -\Phi_{m}(t) \\
       0 & 0 & 1
     \end{pmatrix}
}
\\
&\hskip6mm \times
\footnotesize{
     \begin{pmatrix}
        1 & k\Phi_{m}(t^*) & \frac 12 k\Phi_{m}^2(t^*)-\frac 12 kH_{m}(t^*)+\Psi_{m}(t^*) \\
       0 & 1 & \Phi_{m}(t^*) \\
       0 & 0 & 1
     \end{pmatrix}
}
\\
&=
\footnotesize{
\begin{pmatrix}
       1 & k(-\Phi_m(t)+\Phi_m(t^*)) & \tilde{\omega}  \\
       0 & 1 & -\Phi_m(t)+\Phi_m(t^*) \\
       0 & 0 & 1
     \end{pmatrix}
},
\end{align*}
where
\[
\tilde{\omega} =\frac 12 k(\Phi_m(t^*)-\Phi_m(t))^2 +\frac 12 k(H_m(t)-H_m(t^*))+(\Psi_m(t^*)-\Psi_m(t)).
\]

By Lemma \ref{HLW4.3}, \eqref{13} and \eqref{15}, we get
\begin{align*}
 |\Phi_m(t^*)-\Phi_m(t)| &\leq\sum_{r=0}^{a_m-1}\Big(  |\Phi_{q_i}(t^*+rq_i \alpha)|+|\Phi_{q_i}(t+rq_i \alpha)| \Big)\\
                         &\hskip8mm+     \sum_{l=0}^{b_m}    |\phi_1(t^*+(a_m q_i +l)\alpha)-\phi_1(t+(a_m q_i +l)\alpha)| \\
                         &\leq \frac {C_1}{q_i}+q_iL\|t^*-t \| <2\varepsilon.
\end{align*}
Similarly, we can also obtain  $| H_m(t^*)- H_m(t)|\leq 2\varepsilon$ and $|\Psi_m(t^*)-\Psi_m(t)|\leq 2\varepsilon$.
Therefore, by the definition of $d_G$ and \eqref{HLW6.2}, we get
\begin{equation}\label{21}
  d_G(Y^*(m), Y(m))\leq|\kappa Y(m)^{-1}Y^*(m)|<   8 \max(|k|, 1) \varepsilon.
%\begin{cases}
%8| k|\varepsilon, & \text{if\ } | k|>1;\\
%8\varepsilon, & \text{if\ } | k|\leq1;
%\end{cases}
\end{equation}

From $\eqref{17}, $ $\eqref{18}, $ $\eqref{20}$ and $\eqref{21}, $ we conclude that
\[
d\big(T_1^m(t, \Gamma g), T_1^m(t^*, \Gamma g^*)\big)\leq  \big(14\max(|k|, 1)+6\big)\varepsilon
%\begin{cases}
%(14| k|+6)\varepsilon, & \text{if\ } | k|>1;\\
%20\varepsilon, & \text{if\ } | k|\leq1;
%\end{cases}
\]
for all $m\leq n_i$. Here, and in what follows, $n_i$ is as in \eqref{14}.
So, by the definition of $\bar{d}_n$ in \eqref{bard}, we have
\begin{align*}
\bar{d}_{n_i}((t, \Gamma g), (t^*, \Gamma g^*))
&=\frac 1{n_i}\sum_{m=0}^{n_i -1}d(T_1^m(t, \Gamma g), T_1^m(t^*, \Gamma g^*))
\leq \big(14\max(|k|, 1)+6\big)\varepsilon.
%\begin{cases}
%(14| k|+6)\varepsilon, & \text{if\ } | k|>1;\\
%20\varepsilon, & \text{if\ } | k|\leq1;
%\end{cases}.
\end{align*}
This means that $\mathbb{T}\times\Gamma\backslash G$ can be covered by $\sharp F(k)=\varepsilon^{-1}L^4 q_i^7$ balls
of radius $\delta:= (14\max(|k|, 1)+6)\varepsilon$ under the metric $\bar{d}_{n_i}$, since $(t, \Gamma g)$ can be chosen arbitrarily. It follows that
\[
s_{n_i}(\mathbb{T}\times\Gamma\backslash G, T_1, d, v, \delta)\leq\varepsilon^{-1}L^4 q_i^7.
\]
Since $\mathcal{Q}^\sharp(B)$ is infinite, we can let $q_i$ tend to infinity along $\mathcal{Q}^\sharp(B), $ getting
\begin{equation*}
\begin{split}
  \liminf_{n\rightarrow\infty}\frac {s_{n}(\mathbb{T}\times\Gamma\backslash G, T_1, d, v, \delta)}{n^\tau} &\leq\liminf_{\substack{i\rightarrow\infty \\ q_i\in\mathcal{Q}^\sharp(B),\ i\geq I_0}}\frac {s_{n_i}(\mathbb{T}\times\Gamma\backslash G, T_1, d, v, \delta)}{n_i^\tau} \\
    &\leq\liminf_{\substack{i\rightarrow\infty \\ q_i\in\mathcal{Q}^\sharp(B),\ i\geq I_0}}\frac {\varepsilon^{-1}L^4 q_i^7}{q_i^{8+\tau}}\leq\liminf_{\substack{i\rightarrow\infty \\ q_i\in\mathcal{Q}^\sharp(B),\ i\geq I_0}}\frac {\varepsilon^{-1}L^4}{q_i}=0.
\end{split}
\end{equation*}
Since $\varepsilon$ can be arbitrary small, this means that the measure complexity of $(\mathbb{T}\times\Gamma\backslash G, T, \rho)$ is weaker that $n^\tau$ when $\mathcal{Q}^\sharp(B)$ is infinite.

Finally, we deal with the case that $\mathcal{Q}^\sharp(B)$ is finite. Now $M_1(B)$ is also finite. Hence the conclusion of Lemma \ref{HLW4.2} still holds if we replace $M_2(B)$ by $\mathbb{Z}\backslash\{0\}. $ Hence the functions $\tilde{g}_\phi(t), $ $\tilde{g}_\eta(t), $ and $\tilde{g}_\psi(t)$ defined by
\[
\tilde{g}_\phi(t)=\sum_{m\neq0}\frac{\hat{\phi}(m)e(mt)}{e(m\alpha)-1}, \quad
\tilde{g}_\eta(t)=\sum_{m\neq0}\frac{\hat{\eta}(m)e(mt)}{e(m\alpha)-1}, \quad
\tilde{g}_\psi(t)=\sum_{m\neq0}\frac{\hat{\psi}(m)e(mt)}{e(m\alpha)-1}
\]
are continuous and periodic with period $1$. Thus we can write
\begin{align} \label{22}
&\phi(t)=\tilde{g}_\phi(t+\alpha)-\tilde{g}_\phi(t), \nonumber\\
&\eta(t)=\hat{\eta}(0)+\tilde{g}_\eta(t+\alpha)-\tilde{g}_\eta(t), \nonumber\\
&\psi(t)=\hat{\psi}(0)+\tilde{g}_\psi(t+\alpha)-\tilde{g}_\psi(t).\nonumber
\end{align}
Notice that $\hat{\phi}(0)=0, $ and so there is no constant term in the first equation.
Similarly to \eqref{10},  we define $\tilde{R}:\mathbb{T}\times\Gamma\backslash G\rightarrow\mathbb{T}\times\Gamma\backslash G$ by
\[
\tilde{R}:(t, \Gamma g)\mapsto
\left(
t, \Gamma g\begin {pmatrix}
1 & k \tilde{g}_{\phi}(t) & \frac 12 k \tilde{g}_{\phi}^2(t)-\frac 12 k \tilde{g}_{\eta}(t)+\tilde{g}_{\psi}(t)\\
0 & 1 & \tilde{g}_{\phi}(t)\\
0 & 0 & 1
\end{pmatrix}
\right).
\]
Then $\tilde{T}_1:=\tilde{R}^{-1}\circ T\circ \tilde{R}$ is given by
\[
\tilde{T}_1:(t, \Gamma g)\mapsto\begin {pmatrix}
t+\alpha, \Gamma g\begin {pmatrix}
1 & 0 & -\frac 12 k\hat{\eta}(0)+\hat{\psi}(0) \\
0 & 1 & 0\\
0 & 0 & 1
\end{pmatrix}
\end{pmatrix}
\]
as in $\eqref{12}$. Again by Lemma \ref{HLW5.2},
the measure complexity of $(\mathbb{T}\times\Gamma\backslash G, T, \rho)$ is weaker than $n^\tau$ if and only if the measure complexity of $(\mathbb{T}\times\Gamma\backslash G, \tilde{T}_1, v)$ is weaker than $n^\tau, $ where $v=\rho\circ \tilde{R} $.
However, $d$ is invariant under $\tilde{T}_1. $ So we have for any $n\geq1$ and $\varepsilon>0$ that
$$
s_{n}(\mathbb{T}\times\Gamma\backslash G, \tilde{T}_1, d, v, \varepsilon)=s_{1}(\mathbb{T}\times\Gamma\backslash G, \tilde{T}_1, d, v, \varepsilon).
$$
Since $\mathbb{T}\times\Gamma\backslash G$ is compact, we have $s_{1}(\mathbb{T}\times\Gamma\backslash G, \tilde{T}_1, d, v, \varepsilon)<\infty$ and consequently
$$
\lim_{n\rightarrow\infty}\frac {s_{n}(\mathbb{T}\times\Gamma\backslash G, \tilde{T}_1, d, v, \varepsilon)}{n^\tau}=\lim_{n\rightarrow\infty}\frac {s_{1}(\mathbb{T}\times\Gamma\backslash G, \tilde{T}_1, d, v, \varepsilon)}{n^\tau}=0.
$$
Hence the measure complexity of $(\mathbb{T}\times\Gamma\backslash G, T, \rho)$ is also weaker than $n^\tau$ if $\mathcal{Q}^\sharp(B)$ is finite. The proof is complete.
\end{proof}

\noindent\emph{Proof of Theorem \ref{thm1}}.
\ Note that the the skew product $T$ in Theorem \ref{thm1} is a special case of the skew product $S$ in Theorem \ref{thmrational}.
Theorem \ref{thm1} follows from Theorem \ref{thmrational}, %Proposition \ref{Propreal},
Proposition \ref{prop:4} and Lemma \ref{HWY}.  \qed

\vskip 5mm
\noindent\textbf{Acknowledgments}\ 
This work is supported  by the National Natural Science Foundation of China (Grant No. 11771252).

\vskip 8mm

\end{document}